\newtheorem{theorem}{Theorem}
\newtheorem{lemma}[theorem]{Lemma}
\newtheorem{proposition}[theorem]{Proposition}
\newtheorem{definition}[theorem]{Definition}
\newtheorem{convention}[theorem]{Convention}
\newcommand{\R}{\mathbb{R}}
\newcommand{\cC}{\mathcal{C}}
\newcommand{\cP}{\mathcal{P}}
\renewcommand{\r}{\mathrm}
\begin{document}

\begin{center}
\texttt{Comments, corrections,
and related references welcomed, as always!}\\[.5em]
{\TeX}ed \today
\vspace{2em}
\end{center}

\title%
{A type of algebraic structure related to sets of intervals}
\thanks{%
Readable at \url{http://math.berkeley.edu/~gbergman/papers/}.
}

\subjclass[2010]{Primary: 06A05, 08A05, 08A55.
Secondary: 03C20, 03E25, 05C25, 05C62.
}
\keywords{total ordering on a set making a given family of subsets
convex; interval graph}

\author{George M. Bergman}
\address{University of California\\
Berkeley, CA 94720-3840, USA}
\email{gbergman@math.berkeley.edu}

\begin{abstract}
F.~Wehrung has asked: Given a family $\cC$ of subsets of
a set $\Omega,$ under what conditions will there exist
a total ordering on $\Omega$ with respect to which every member
of $\cC$ is convex?

We look at the family $\cP$ of
subsets of $\Omega$ generated by $\cC$ under certain
partial operations which preserve
convexity; we determine the possible structures of $\cP$
if $\cC,$ and hence $\cP,$ is finite, and note a condition
on that structure that is necessary and sufficient
for there to exist an ordering of $\Omega$ of the desired sort.
From this we obtain a criterion which works without the
finiteness hypothesis on $\cC.$

We establish bounds on the cardinality of
the set $\cP$ generated by an $\!n\!$-element set $\cC.$

We end by noting some other ways of answering Wehrung's
question, using results in the literature.
\end{abstract}
\maketitle

\section{Introduction}\label{S.intro}

If $(\Omega,\leq)$ is a totally ordered set, we shall call a subset
$A$ of $\Omega$ {\em convex} if for
$p,\,q,\,r\in\Omega$ with $p\leq q\leq r,$ the conditions
$p,\,r\in A$ imply $q\in A.$
(I use the word ``interval'' in the title, since
it concisely suggests ``convex subset of a totally ordered set'';
but in general, I shall write ``convex set''.
The word ``interval'' will, however, come up in \S\ref{S.int_grph},
in connection with results in the literature.)

This note answers the question:
\begin{equation}\begin{minipage}[c]{35pc}\label{d.FW_q}
(F.\,Wehrung, personal correspondence related to~\cite{FW_r-l};
slightly reworded.)
Let~$\Omega$ be a set (usually finite), and
$\cC$ a set of subsets of $\Omega.$
When does there exist a total order on~$\Omega$ with respect to
which every member of~$\cC$ is convex?
\end{minipage}\end{equation}

In \S\ref{S.inf<fin}, we will show that given a subset
$\cC$ of $\Omega,$ there exists a total ordering $\leq$
on $\Omega$ having the
desired property for $\cC$ if and only if for every finite subset
$\cC'\subseteq\cC,$ there exists a total
ordering $\leq_{\cC'}$ on $\Omega$ having that property for $\cC'.$
Thus, the general problem reduces to the
corresponding problem for finite $\cC.$

In \S\ref{S.patchw} we note some natural partial operations on the
set of convex subsets of a totally ordered set $\Omega,$
which can be described solely in set-theoretic
terms, and we call a set $\cP$ of
subsets of an arbitrary set $\Omega$ which is closed under
these partial operations a ``patchwork''.
Thus, a set $\cC$ of subsets of $\Omega$ has
all its members convex under some total
ordering $\leq$ on $\Omega$ if and only if the patchwork $\cP$
that it generates has the same property.
Moreover, the patchwork generated by a finite set $\cC$
of subsets of $\Omega$ is contained in the Boolean ring
of sets generated by $\cC,$ hence is again finite;
so question~\eqref{d.FW_q} for finite sets $\cC$
comes down to the same question for finite patchworks $\cP.$

In \S\ref{S.fin_cP} we determine the structures of all
finite patchworks $\cP,$ and note for which of these
there exist orderings of $\Omega$ making all members of $\cP$ convex.
In \S\ref{S.nonfin}, we combine this result with that
of \S\ref{S.inf<fin} to
get a condition on a not necessarily finite patchwork
which is necessary and sufficient for the existence of such an ordering.

In \S\ref{S.size} we examine how many elements the
patchwork $\cP$ generated by an $\!n\!$-element set $\cC$ can have.

After an earlier draft of this note was sent
out, it was pointed out to me that there
are results in the literature that can be used to answer
question~\eqref{d.FW_q} in other ways.
These are noted in~\S\ref{S.int_grph}.
So perhaps the main value of this paper is the detailed
structure it reveals of the families $\cP$ arising from families $\cC$
for which question~\eqref{d.FW_q} has a positive answer -- and
possibly also the corresponding information when
question~\eqref{d.FW_q} is not assumed to have a positive answer;
whether this has interesting applications I do not know.

We remark that the arguments of \S\ref{S.fin_cP} that give
the structures of all finite patchworks
are rather lengthy and intricate; but the reasoning is elementary.
In contrast, the argument below that reduces
question~\eqref{d.FW_q} to the finite case
calls on the Compactness Theorem of model theory,
or, alternatively, on ultraproducts.
So that proof, though brief, is the
one non-elementary piece of reasoning in this note.

\section{Reduction to the case of finite $\cC$}\label{S.inf<fin}

\begin{lemma}\label{L.inf<fin}
Let $\Omega$ be a set, and $\cC$ a set of subsets of $\Omega.$
Then the following conditions are equivalent.\\[.2em]
\textup{(i)} \ \ There exists a total order on~$\Omega$ with respect to
which every member of~$\cC$ is convex.\\[.2em]
\textup{(ii)} \ For each finite subset $\cC'\subseteq\cC,$
there exists a total order on~$\Omega$ with respect to
which every member of~$\cC'$ is convex.
\end{lemma}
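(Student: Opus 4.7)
The direction (i)$\Rightarrow$(ii) is immediate: if $\leq$ makes every $A\in\cC$ convex, then it makes every $A\in\cC'$ convex for any $\cC'\subseteq\cC$. The content is in (ii)$\Rightarrow$(i).

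For this direction the plan is to invoke the Compactness Theorem of first-order logic. Set up a language $L$ with a constant symbol $c_\omega$ for each $\omega\in\Omega$ and a binary relation symbol $\leq$, and let $T$ be the theory whose axioms are: (a)~$c_\omega\neq c_{\omega'}$ for each pair of distinct $\omega,\omega'\in\Omega$; (b)~the universally quantified sentences saying that $\leq$ is a total order on the universe; and (c)~for each $A\in\cC$ and each triple $\omega_1,\omega_3\in A$, $\omega_2\in\Omega\setminus A$, the sentence
\[
\neg\bigl(c_{\omega_1}\leq c_{\omega_2}\,\wedge\,c_{\omega_2}\leq c_{\omega_3}\bigr),
\]
expressing that no element outside $A$ lies between two elements inside it.

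Any finite $T_0\subseteq T$ mentions only finitely many constants and refers only to a finite subfamily $\cC'\subseteq\cC$. By hypothesis~(ii), there is a total order on $\Omega$ making every member of $\cC'$ convex; interpreting each $c_\omega$ as $\omega$ yields a model of $T_0$. Compactness then provides a model $M$ of all of $T$. Since $T$ forces the $c_\omega$ to be distinct, I identify $\Omega$ with its image $\{c_\omega^M:\omega\in\Omega\}\subseteq M$ and restrict $\leq^M$ to a total order on $\Omega$. The axioms of type~(c) then guarantee convexity: if $p,r\in A\in\cC$ and $p\leq q\leq r$ in $\Omega$, then $q\notin A$ would violate the corresponding instance of~(c), forcing $q\in A$.

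The only nontrivial step is the setup of $T$ itself; once the axioms are in place, finite satisfiability follows immediately from (ii), and extracting a total order on $\Omega$ from the model is mechanical, so there is no real obstacle. The same conclusion can alternatively be drawn via an ultraproduct: let $I$ be the set of finite subsets of $\cC$, fix an ultrafilter on $I$ containing every cofinal set $\{\cC''\in I:\cC'\subseteq\cC''\}$, form the ultrapower of $\Omega$ over $I$ using the orderings supplied by (ii), and transfer the resulting order back to $\Omega$ via the diagonal embedding.
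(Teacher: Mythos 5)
Your proof is correct and follows essentially the same route as the paper, which likewise notes that (ii)$\Rightarrow$(i) follows from the Compactness Theorem (using a language with a constant for each element of $\Omega$ and a binary relation $\leq$) and then offers the ultraproduct construction as a variant; you have merely made the compactness version the detailed one and the ultraproduct version the sketch. Your encoding of convexity by quantifier-free sentences indexed by triples, rather than by unary predicates for the members of $\cC,$ is an inessential difference.
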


\begin{proof}
(i)$\implies$(ii) is clear, since an ordering that works
for $\cC$ works for any subset $\cC'\subseteq\cC.$

The reader familiar with the Compactness Theorem of model theory
\cite[Corollary~5.6]{PMC.UA} will see that %
that theorem implies the reverse implication.
(One uses a language with a
constant for each element of $\Omega,$ a unary relation
for each member of $\cC,$ and a binary relation ${\leq}.)$
I sketch below a variant of this proof
that uses ultraproducts \cite[Theorem~5.1]{PMC.UA} rather than
the Compactness Theorem.

Assume~(ii).
Let $X$ be the set of all finite subsets
of $\cC,$ for each $\cC'\in X$ let $J_{\cC'}\subseteq X$ be the
set of all $\cC''\in X$ which contain $\cC',$
and let $\mathcal{F}$ be the set of all sets $J\subseteq X$
which for some $\cC'\in X$ contain $J_\cC'.$
Since $J_{\cC'}\cap J_{\cC''} = J_{\cC'\cup\,\cC''},$ we see that
$\mathcal{F}$ is a filter on $X,$ and since no $J_{\cC'}$ is empty,
$\mathcal{F}$ does not contain the empty set,
i.e., it is a proper filter.

Hence we can choose an ultrafilter $\mathcal{U}$ on $X$
containing $\mathcal{F}.$
By~(ii), we can choose for each $\cC'\in X$ a total ordering
$\leq_{\cC'}$ on $\Omega$ with respect to which all
members of $\cC'$ are convex.
The ultraproduct of the totally ordered sets
$(\Omega,\,\leq_{\cC'})$ $(\cC'\in X)$ with respect to $\mathcal{U}$
will be a totally ordered set $(\Omega^*,\,\leq^*),$
such that for each
$A\in\cC,$ the ultrapower of $A$ with respect to $\mathcal{U}$
is a convex subset $A^*$ of $\Omega^*.$
$(A^*$ is convex because $A$ is convex with respect to ``almost all''
the chosen orderings~$\leq_{\cC'};$
i.e., the set of $\cC'$ such that $A$ is convex with
respect to $\leq_{\cC'}$ contains $J_{\cC'},$ hence
belongs to $\mathcal{U}.)$

There is a natural embedding of $\Omega$ in
its ultrapower $\Omega^*,$ and for each $A\in\cC,$
the inverse image in $\Omega$ of $A^*\subseteq\Omega^*$ is $A.$
Hence the restriction of $\leq^*$ to $\Omega$ is a
total ordering $\leq$ under which each $A\in\cC$ is convex, proving~(i).
\end{proof}

\section{Patchworks}\label{S.patchw}

The kind of structure we will call a patchwork is motivated by

\begin{lemma}\label{L.ops}
If $(\Omega,\,\leq)$ is a totally ordered set, and $A,~B$
are convex subsets of $\Omega$ which have nonempty intersection, and
neither of which contains the other, then $A\cap B,$
$A\cup B,$ and $A\setminus B$ are also convex.

Also, $\emptyset$ and $\Omega$ are convex.
\qed
\end{lemma}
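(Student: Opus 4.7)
The plan is to dispatch the five assertions separately, beginning with the easy ones and ending with the subtle $A\setminus B$ case, which is where the hypotheses ``$A\cap B\ne\emptyset$'' and ``neither contains the other'' both get used.

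First, $\emptyset$ and $\Omega$ are convex by vacuous or trivial verification of the defining implication. For $A\cap B$, I would simply observe that intersections of convex sets are convex: if $p\leq q\leq r$ with $p,r\in A\cap B$, then $q\in A$ and $q\in B$ by applying the convexity of $A$ and of $B$ separately (no hypotheses on $A,B$ beyond convexity are needed here).

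For $A\cup B$, take $p\leq q\leq r$ with $p,r\in A\cup B$. If $p,r$ both lie in $A$, or both in $B$, convexity of that single set places $q$ there. Otherwise, by symmetry assume $p\in A$ and $r\in B$. Using the hypothesis $A\cap B\ne\emptyset$, pick $s\in A\cap B$; comparing $s$ with $q$ splits into the case $s\leq q$ (so $s\leq q\leq r$ with $s,r\in B$, giving $q\in B$) and the case $s\geq q$ (so $p\leq q\leq s$ with $p,s\in A$, giving $q\in A$). Either way $q\in A\cup B$.

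The main obstacle will be $A\setminus B$, since without the hypothesis $B\not\subseteq A$ this fails (an interval with a sub-interval removed from its middle is the prototypical non-convex set). Suppose $p\leq q\leq r$ with $p,r\in A\setminus B$. Convexity of $A$ immediately gives $q\in A$; the real task is to show $q\notin B$. I would argue by contradiction: assume $q\in B$, and use the hypothesis $B\not\subseteq A$ to pick some $b\in B\setminus A$. Then the three cases $b<p$, $b>r$, and $p\leq b\leq r$ each yield a contradiction: in the first, $b\leq p\leq q$ with $b,q\in B$ forces $p\in B$ by convexity of $B$, contradicting $p\in A\setminus B$; the second case is symmetric with $r$; and in the third, $p\leq b\leq r$ with $p,r\in A$ forces $b\in A$ by convexity of $A$, contradicting $b\in B\setminus A$. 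This trichotomy exhausts the possibilities, so $q\notin B$ and hence $q\in A\setminus B$, completing the proof.
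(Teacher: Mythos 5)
Your proof is correct and complete; the paper itself states this lemma with no proof (just a \qed), regarding all five verifications as routine, and your case analysis supplies exactly the details that are being left to the reader. Your closing observations about which hypotheses are needed where ($A\cap B\neq\emptyset$ for the union, $B\not\subseteq A$ for the difference) also match the paper's own parenthetical remark following the lemma that each conclusion requires less than the full set of assumptions.
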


(Each of the conclusions of the first sentence of Lemma~\ref{L.ops}
needs less than the full set of assumptions on $A$ and $B.$
But the cases where the unnecessary assumptions fail are
trivial, so we lose nothing in using this formulation.)

\begin{definition}\label{D.patchw}
Let $\Omega$ be a set.

We will say that subsets $A,\,B\subseteq\Omega$ {\em overlap}
if they have nonempty intersection, but neither contains the other.

For any set $\Omega,$ a {\em patchwork} of subsets of $\Omega$
\textup{(}which may be shortened to ``a patchwork on $\Omega$'' or
simply ``a patchwork'' when the context makes $\Omega$ clear\textup{)}
will mean a set $\cP$ of subsets of $\Omega$ such that
for every pair of {\em overlapping} sets
$A,\,B\in\cP,$ the set $\cP$ also contains $A\cap B,$
$A\cup B,$ and $A\setminus B,$ and such that
$\emptyset$ and $\Omega$ also belong to $\cP.$

If $\cP$ is a patchwork on $\Omega,$
members of $\cP$ will be called {\em $\!\cP\!$-sets}.
\end{definition}

We note

\begin{lemma}\label{L.finite}
The patchwork $\cP$ on a set $\Omega$ generated by a
finite set $\cC$ of subsets of $\Omega$ is finite.
\end{lemma}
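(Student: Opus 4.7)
The plan is to show that the patchwork generated by $\cC$ is contained in the (finite) Boolean algebra of subsets of $\Omega$ generated by $\cC$, which immediately gives the desired finiteness.

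First I would observe that the three partial operations defining a patchwork ($A\cap B$, $A\cup B$, $A\setminus B$ for overlapping pairs $A,B$) together with the specified constants $\emptyset$ and $\Omega$ are all restrictions of operations (and constants) of any Boolean algebra of subsets of $\Omega$. Hence any family $\cB$ of subsets of $\Omega$ which contains $\emptyset$ and $\Omega$, contains $\cC$, and is closed under the unrestricted binary operations $\cap$, $\cup$, $\setminus$, will automatically be closed under the patchwork operations, and so will contain the patchwork generated by $\cC$.

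Next I would take $\cB$ to be the Boolean subalgebra of the power set of $\Omega$ generated by $\cC\cup\{\emptyset,\Omega\}$, and invoke the classical fact that a Boolean algebra generated by finitely many elements is finite. (If one wants an explicit bound, a direct argument suffices: for $\cC=\{A_1,\dots,A_n\}$ the atoms of $\cB$ are the nonempty sets of the form $\bigcap_{i=1}^n A_i^{\varepsilon_i}$, where $\varepsilon_i\in\{0,1\}$ and $A_i^0=A_i$, $A_i^1=\Omega\setminus A_i$; there are at most $2^n$ such atoms, and every element of $\cB$ is a union of atoms, giving $|\cB|\leq 2^{2^n}$.)

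Combining the two observations, the patchwork generated by $\cC$ is a subset of $\cB$, and so is finite. There is really no obstacle here: the only thing to double-check is that the patchwork operations are genuinely restrictions of unrestricted Boolean operations, which is immediate since $\cap$, $\cup$, $\setminus$ give the same set-theoretic result whether or not the arguments happen to overlap.
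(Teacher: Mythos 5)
Your proposal is correct and is essentially the paper's own argument: the generated patchwork lies inside the Boolean ring (algebra) of subsets generated by $\cC,$ which is finite since it is finitely generated. The extra details you supply (that the partial operations are restrictions of the unrestricted Boolean operations, and the atom-counting bound $2^{2^n}$) are fine but not a different method.
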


\begin{proof}
$\cP$ will be contained in the Boolean ring of subsets of $\Omega$
generated by $\cC,$ and a finitely generated Boolean ring is finite.
\end{proof}

\section{The case of finite $\cP.$}\label{S.fin_cP}

\begin{convention}\label{C.cP_fin}
Throughout this section $\Omega$ will be a fixed
set, and $\cP$ a {\em finite} patchwork on $\Omega.$
\end{convention}

A key to analyzing the structure of $\cP$ is
to look at those of its members to which its
partial operations {\em cannot} be applied:

\begin{definition}\label{D.aut}
A $\!\cP\!$-set $A$ will be called {\em autonomous}
if it is nonempty, and does not overlap any member of~$\cP$
\textup{(}cf.~Definition~\ref{D.patchw}\textup{)}.
We will generally shorten ``autonomous $\!\cP\!$-set'' to
``autonomous set''.

If $A\in\cP$ is autonomous,
then the set of maximal autonomous {\em proper} subsets of $A$
will be called the {\em cohort} under $A.$
Elements of $A$ that are not in the union of the cohort
under $A$ will be called {\em non-cohort elements} of $A.$
\end{definition}

Note that

\begin{lemma}\label{L.2aut}
Any two autonomous members of a patchwork $\cP$
are either disjoint, or one contains the other.
\end{lemma}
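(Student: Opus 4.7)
The plan is to observe that the conclusion is essentially an unpacking of the definitions. Let $A$ and $B$ be two autonomous members of $\cP$. Since $A$ is autonomous, it does not overlap any member of $\cP$; in particular, since $B\in\cP$, the sets $A$ and $B$ do not overlap. By the definition of ``overlap'' in Definition~\ref{D.patchw}, failing to overlap means that either $A\cap B=\emptyset$, or one of $A,B$ contains the other, which is exactly the desired conclusion.

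So the proof is essentially a one-liner, and there is no real obstacle; the roles of $A$ and $B$ do not even need to be treated symmetrically, since the negation of ``overlap'' is itself symmetric in the two sets. The only thing worth remarking on is that we never needed the nonemptiness clause in the definition of ``autonomous'' (nor any closure property of $\cP$), only the fact that $B$ lies in $\cP$ and that $A$ does not overlap any member of $\cP$.
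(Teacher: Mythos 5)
Your proof is correct and is essentially the paper's own argument: the paper likewise just unpacks the definition of ``autonomous,'' remarking that the conclusion in fact holds for any two $\!\cP\!$-sets at least \emph{one} of which is autonomous --- exactly the asymmetric observation you make at the end.
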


\begin{proof}
By the definition of ``autonomous'' this is, in fact, true of any
two $\!\cP\!$-sets at least {\em one} of which is autonomous.
\end{proof}

Thus the autonomous members of $\cP$
form a tree under inclusion, branching
downward from the top element, $\Omega.$
It follows that for every proper subset $A$ of $\Omega,$
there is a least autonomous $\!\cP\!$-set $B$ properly containing~$A.$

Note that every {\em minimal}
autonomous set $A$ consists entirely of non-cohort elements.
Non-minimal autonomous $\!\cP\!$-sets necessarily have nonempty cohorts
under them; we shall see that
they may or may not also have non-cohort elements.
Each element of $\Omega$ is a non-cohort element of a
unique autonomous set, namely, the least autonomous
set containing it.

Not every union of autonomous sets need be a $\!\cP\!$-set, but
the converse is true:

\begin{lemma}\label{L.cup_aut}
Let $A$ be a $\!\cP\!$-set.
Then\\[.2em]
\textup{(i)} \ \ $A$ is a union of autonomous sets; in fact
it is the disjoint union of the maximal
elements of the set of autonomous sets which it contains.

Moreover, if $A\neq\Omega,$ then letting $B$ be the least
autonomous set {\em properly} containing $A,$ we have\\[.2em]
\textup{(ii)} \ The maximal autonomous sets contained in $A$
\textup{(}cf.~\textup{(i))}
all belong to the cohort under $B.$\\[.2em]
\textup{(iii)} If $A'$ is a $\!\cP\!$-set which overlaps
$A,$ then the least
autonomous set properly containing $A'$ is also $B.$\\[.2em]
\textup{(iv)} If $A$ is not autonomous, then
it can be written as the disjoint union of two proper $\!\cP\!$-subsets,
each of which is a union of subsets of the cohort under $B.$
\end{lemma}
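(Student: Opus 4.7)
The plan is to prove (i) first by induction on $n(A) := |\{B \in \cP : B \subsetneq A\}|$, and then to derive (ii), (iii), and (iv) using (i) together with the tree structure on autonomous sets given by Lemma~\ref{L.2aut}.

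For the base case $n(A) = 0$ of (i), $A$ has no proper $\cP$-subset, and in particular cannot overlap any $\cP$-set (since $A \cap A'$ would otherwise be such a subset), so $A$ is empty or autonomous and the claim is trivial. For the induction step, if $A$ is autonomous then $A$ itself is the unique maximal autonomous subset of $A$; otherwise $A$ overlaps some $A' \in \cP$, and I would decompose $A = A_1 \sqcup A_2$ with $A_1 := A \cap A'$ and $A_2 := A \setminus A'$. These are nonempty proper $\cP$-subsets of $A$ with strictly smaller $n$, so by induction each $A_i$ is the disjoint union of its maximal autonomous subsets. The key observation is that any autonomous $D \subseteq A$ lies entirely in $A_1$ or entirely in $A_2$: since $D$ and $A'$ do not overlap, either $D \cap A' = \emptyset$ (giving $D \subseteq A_2$), $D \subseteq A'$ (giving $D \subseteq A_1$), or $A' \subseteq D$, and the last possibility is excluded by $D \subseteq A$ and $A' \not\subseteq A$. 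From this, the maximal autonomous subsets of $A$ are precisely those of $A_1$ together with those of $A_2$, giving the claimed disjoint-union decomposition.

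For (iii), since $B$ is autonomous and $B \cap A' \supseteq A \cap A' \neq \emptyset$, Lemma~\ref{L.2aut} yields $A' \subseteq B$ or $B \subseteq A'$; the latter would force $A \subseteq A'$, against the overlap hypothesis, so $A' \subsetneq B$. Thus the least autonomous $B'$ properly containing $A'$ satisfies $B' \subseteq B$; if $B' \subsetneq B$, climbing the autonomous tree produces some cohort member $D$ of $B$ with $A' \subseteq D \subsetneq B$, and non-overlap of the autonomous set $D$ with $A$, combined with $D \cap A \supseteq A \cap A' \neq \emptyset$, forces $A \subseteq D$ or $D \subseteq A$ — the first contradicts the minimality of $B$, and the second would place $A' \subseteq D \subseteq A$, against the overlap. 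Hence $B' = B$.

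For (ii), let $C$ be a maximal autonomous subset of $A$ (so $C \subsetneq B$) and choose, by finiteness, a maximal autonomous $C^*$ with $C \subseteq C^* \subsetneq B$; such a $C^*$ belongs to the cohort under $B$. The same dichotomy gives $C^* \subseteq A$ or $A \subseteq C^*$, and the latter would make $C^*$ an autonomous set strictly between $A$ and $B$, contradicting the minimality of $B$; thus $C^* \subseteq A$, after which maximality of $C$ as an autonomous subset of $A$ gives $C = C^*$. Finally, (iv) follows from the same decomposition $A = A_1 \sqcup A_2$ used for (i): each $A_i$ is a nonempty proper $\cP$-subset and, by (i), a disjoint union of its maximal autonomous subsets, which by the argument for (i) are maximal autonomous subsets of $A$, and hence by (ii) members of the cohort under $B$. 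The main obstacle I anticipate is not any single step, but keeping the repeated case analyses with Lemma~\ref{L.2aut} organized: the ``$D$ autonomous, so $D$ and some $\cP$-set are comparable or disjoint, and each alternative yields a contradiction'' pattern recurs with different choices of $D$ and different minimality or maximality hypotheses supplying the contradiction, and one must be careful not to confuse them.
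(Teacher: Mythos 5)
Your proof is correct and follows essentially the same route as the paper's: the same overlap decomposition $A=(A\cap A')\sqcup(A\setminus A')$ drives both (i) and (iv), and (ii) and (iii) rest on the same dichotomy (an autonomous set and any $\!\cP\!$-set meeting it must be nested) applied to the tree of autonomous sets. The only organizational difference is in (iii), where the paper applies the autonomy of $B$ to the single $\!\cP\!$-set $A\cup A'$ and then invokes symmetry, whereas you first get $A'\subseteq B$ directly and then rule out a smaller $B'$ by exhibiting a cohort member of $B$ that would have to overlap $A$; both versions are sound.
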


\begin{proof}
(i):  If $A$ is autonomous this is
trivial:\ it is the union of the $\!1\!$-element family $\{A\}.$
If $A$ is not autonomous,
assume inductively that all $\!\cP\!$-sets properly contained in $A$
are unions of autonomous sets.
Since $A$ is not autonomous, it overlaps some $\!\cP\!$-set $A'.$
Then $A\cap A'$ and $A\setminus A'$ are $\!\cP\!$-sets
properly contained in $A,$ so by inductive hypothesis,
they are unions of autonomous sets, hence so is their union,~$A.$
That $A$ is the union of the {\em maximal} autonomous sets
that it contains follows, since $\cP$ is finite.
These sets are disjoint by Lemma~\ref{L.2aut}.

(ii):  If $A$ is autonomous, then $A$ is
the unique maximal autonomous subset of $A,$
and the conclusion is immediate.

If $A$ is not autonomous, suppose, by way of contradiction,
that some maximal autonomous subset $C$ of $A$ belonged to a different
cohort from the cohort under $B;$ say the cohort
under an autonomous set $D.$
This makes $D$ the least autonomous set properly containing $C;$
but $C\subset A \subset B,$ so $B$  is an
autonomous set properly containing $C,$ so $D\subset B,$
so as $B$ is the least autonomous subset containing $A,$
$A$ is not contained in $D.$
Neither can $A$ contain $D;$ if it did, $C$ would
not be a {\em maximal} autonomous subset of $A.$
So $A$ overlaps $D;$ but
this contradicts the assumption that $D$ is autonomous,
completing the proof.

(iii): We shall show that $B$ contains $A'.$
Then, by the same argument with the roles
of $A$ and $A'$ reversed, the least autonomous set $B'$
properly containing $A'$ contains $A.$
Thus, $B$ and $B'$ must be the same, giving the desired assertion.

Note that $A\cup A'$ is a $\!\cP\!$-set which is
not disjoint from $B,$ so by the autonomy of $B,$
it must either be contained in $B$ or properly contain $B.$
If the latter were true, then $A'$ would have elements outside
$B,$ and also contain the
nonempty set $B\setminus A;$ but since it does
not contain all of $A\subset B,$ it would not contain $B,$
contradicting the autonomy of $B.$
So $A\cup A'$ must be contained in $B,$
so $A'$ is contained in $B,$ as required.

(iv): Since $A$ is nonautonomous, we can choose a
$\!\cP\!$-set $C$ which it overlaps.
Thus, by~(iii), the least autonomous set containing $C$ is $B,$
so by~(i) and~(ii), $A$ and $C$ are both unions
of members of the cohort under $B.$
Now $A$ is the union of the disjoint nonempty $\!\cP\!$-sets
$A\cap C$ and $A\setminus C;$
so this is a decomposition of desired sort.
\end{proof}

The above results suggest
the question: Given an autonomous set with nonempty cohort under it,
{\em which} unions of subsets of this cohort can be $\!\cP\!$-sets?
We will use the following concept.
(We define it here for general $\!\cP\!$-sets,
but until the last result of this section,
we will only use it for autonomous sets.)

\begin{definition}\label{D.adj}
Two $\!\cP\!$-sets $A$ and $B$ will be called {\em adjacent}
if they are disjoint, and their union is again a $\!\cP\!$-set.

The {\em adjacency graph} of a family of $\!\cP\!$-sets
will mean the graph having the members of this family as
vertices, and having an edge between a pair of such
vertices if and only if they are adjacent as $\!\cP\!$-sets.
\end{definition}

The word ``adjacent'' is, of course, motivated by the case where
$\cP$ consists of sets convex under a total ordering $\leq$ on $\Omega.$
But our definition is not limited to that case;
and even in that case, two sets $A,\,B\in\cP$
that are adjacent as convex subsets of $\Omega$ need not be
adjacent under our present definition, if our patchwork
$\cP$ doesn't happen to include $A\cup B.$

\begin{lemma}\label{L.adj-graph}
A subset $A$ of $\Omega$ is a $\!\cP\!$-set if and only
if it can be written as the union of a family of
autonomous sets whose adjacency graph is connected.
\end{lemma}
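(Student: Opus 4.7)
My plan for proving the lemma has two parts. For the ``if'' direction, suppose $A = \bigcup_i X_i$ for a family of autonomous $\cP$-sets with connected adjacency graph. Using Lemma~\ref{L.2aut}, I may assume without loss of generality that the $X_i$ are pairwise disjoint (a nested pair contributes no adjacency edge, so discarding the smaller one preserves both connectedness and the union). Taking a BFS ordering $X_1, X_2, \ldots, X_n$ of the adjacency graph from a spanning tree rooted at $X_1$, I show $Y_j := X_1 \cup \cdots \cup X_j \in \cP$ by induction on $j$. At each step $X_{j+1}$ is adjacent to some predecessor $X_l$, giving $Z := X_l \cup X_{j+1} \in \cP$. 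Then $Y_j$ and $Z$ have intersection $X_l \neq \emptyset$, while $X_{j+1}$ being disjoint from $Y_j$ prevents either from containing the other, so Definition~\ref{D.patchw} yields $Y_{j+1} = Y_j \cup Z \in \cP$.

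For the ``only if'' direction, I would prove by strong induction on $|A|$ that the family of maximal autonomous subsets of $A$ (disjoint by Lemma~\ref{L.cup_aut}(i)) has a connected adjacency graph. When $A$ is autonomous, $\{A\}$ trivially works. Otherwise Lemma~\ref{L.cup_aut}(iv) gives a decomposition $A = A' \sqcup A''$ into proper $\cP$-subsets, each a cohort union under $B$ (the least autonomous set properly containing $A$). By Lemma~\ref{L.cup_aut}(ii), the families of maximal autonomous subsets of $A'$ and of $A''$ are exactly the cohort elements of $B$ lying in $A'$ and in $A''$ respectively, and they partition the maximal autonomous family of $A$; the inductive hypothesis guarantees each part is adjacency-connected.

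The main obstacle---and the most intricate step---is exhibiting an adjacency edge linking the two parts: indices $i$ in the cohort-index set $I'$ of $A'$ and $j$ in $I''$ of $A''$ with $C_i \cup C_j \in \cP$. My approach is an inner induction on $|I_Q|$ where $Q$ ranges over cohort-union $\cP$-subsets of $A$ whose cohort-index set meets both $I'$ and $I''$ (such $Q$ exist because $A$ itself qualifies), and I pick $Q$ of minimum $|I_Q|$. If $|I_Q| = 2$ then $Q$ is the required bridging edge. Otherwise Lemma~\ref{L.cup_aut}(iv) applied to $Q$ yields $Q = Q_1 \sqcup Q_2$, and minimality forces each of $Q_1, Q_2$ to lie entirely in $A'$ or in $A''$, say $Q_1 \subseteq A'$ and $Q_2 \subseteq A''$; one then recurses on the smaller straddling instance $Q$ with partition $(I_{Q_1}, I_{Q_2})$, verifying via the overlap calculus that this is indeed a (iv)-decomposition of $Q$. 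The delicate subcase where no proper straddling $Q$ exists (so the minimum is $|I|$ itself) is handled by invoking the $\cP$-set $C$ that overlaps $A$ (guaranteed by non-autonomy of $A$): Lemma~\ref{L.cup_aut}(iii) shows $C$ is a cohort union under $B$ extending, say, $A'$ by a nonempty set of cohort elements outside $A$, and chaining the inductive hypothesis applied to $C$ with the overlap operation against $A$---using that a $\cP$-subset $C_i \cup C_j \cup C_k$ straddling $A$ yields $C_i \cup C_j \in \cP$ by intersection with $A$---ultimately produces the bridging edge inside $I$. Once this is established, the two connected subfamilies merge into a single connected adjacency graph for $A$, completing the outer induction.
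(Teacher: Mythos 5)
Your ``if'' direction is correct and takes a legitimately different route from the paper: the paper deletes a non-cut vertex of the connected adjacency graph (citing a standard graph-theoretic fact that one exists) and inducts on the number of vertices, whereas you build the union up along a spanning tree; either works. Two small repairs: in your inductive step the case $j=1$ must be disposed of separately, since there $Y_j=X_l\subseteq Z$ and the two sets do not overlap (but then $Y_{j+1}=Z\in\cP$ directly); and note that $\Omega$ is not assumed finite in this section (only $\cP$ is), so your outer induction in the converse direction should run on the number of cohort members involved rather than on $|A|$.

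The genuine gap is in the ``only if'' direction, at exactly the step you identify as the main obstacle. You insist on exhibiting a single adjacency edge crossing from $I'$ to $I'',$ and your inner induction bottoms out in the ``delicate subcase'' where no proper straddling cohort-union $\!\cP\!$-subset of $A$ exists. But in that subcase a crossing edge $C_i\cup C_j$ with $i\in I',$ $j\in I''$ would itself be such a proper straddling subset (as $|I|\geq 3$), so what you actually need to show is that the subcase is vacuous --- which is essentially the entire difficulty --- and your sketch supplies no valid mechanism: you propose to ``chain the inductive hypothesis applied to $C,$'' but $C$ overlaps $A$ and hence is neither a $\!\cP\!$-subset of $A$ nor smaller than $A$ in any sense measured by your outer or inner induction, so no inductive hypothesis is available for it. The paper sidesteps the issue by never looking for a crossing edge: it chooses $C$ to overlap the non-autonomous half $A_1,$ notes via Lemma~\ref{L.cup_aut}(iii) that $C$ is again a union of members of the cohort under $B,$ and sets $A_3=A\cap C$ if $C$ meets $A_2,$ and $A_3=A\setminus C$ otherwise. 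In either case $A_3$ is a \emph{proper} cohort-union $\!\cP\!$-subset of $A$ meeting both $A_1$ and $A_2,$ so the induction hypothesis applies to $A_3$ and yields a connected subgraph joining the two halves; connectivity of the whole graph follows without ever producing an explicit crossing edge. I recommend replacing your bridging-edge scheme with this argument.
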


\begin{proof}
The statement is trivial if $A$ is an autonomous
set or $\emptyset,$ so assume that neither of these is the case.

To get the ``if'' direction, suppose $A$ is the union of
a family of more than one autonomous sets which has
connected adjointness graph.
Note that the implication in this direction is, in fact, also
trivially true if the graph in question has exactly $2$ vertices,
by the definition of adjacency;
so let us assume it has $\geq 3$ vertices.
Let us also assume inductively that the corresponding implication
is true for every set $A'$ that can be
written as the union of a family of fewer
autonomous sets than we are using for~$A.$

Let us now choose a vertex $B$ in the graph for $A$
such that the subgraph obtained by removing $B,$ i.e.,
the adjacency graph of the family with union $A\setminus B,$
is still connected.
(Such a vertex exists by~\cite[Theorem~3.2.10]{graph}.)
Thus, by our inductive assumption, $A\setminus B$ is a $\!\cP\!$-set.
Let $C$ be a vertex of the graph for $A$ which is adjacent to $B.$
Since the graph for $A$ has $\geq 3$ vertices,
it has vertices other than $B$ and $C.$
Hence neither of the $\!\cP\!$-sets $A\setminus B$ and $B\cup C$
contains the other, moreover, they overlap in $C;$ so their union,
$A,$ is a $\!\cP\!$-set, as desired.

Conversely, let $A$ be a $\!\cP\!$-set which is neither autonomous
nor empty, and $B$ be the least autonomous set containing $A.$
By Lemma~\ref{L.cup_aut}(i) and~(ii), $A$ is the union
of a subset of the cohort under $B;$ we wish to show that
the adjacency graph of this expression for $A$ is connected.
Again, this is clear if that graph has $\leq 2$ vertices,
so suppose it has at least $3,$ and assume by induction
that every subfamily of the cohort under $B$ whose union is
a proper $\!\cP\!$-subset of $A$ corresponds to a connected
subgraph of that graph.

Since $A$ is non-autonomous,
Lemma~\ref{L.cup_aut}(iv) gives us a decomposition of our
expression for $A$ into two disjoint subfamilies, corresponding
to disjoint $\!\cP\!$-sets $A_1$ and $A_2$ with union $A.$
By our inductive assumption, the graphs corresponding
to these $\!\cP\!$-sets are connected, so to show that the
graph corresponding to $A$ is connected, it will suffice
to find a connected subgraph thereof
that meets both the indicated subgraphs.
Again invoking our inductive assumption, we will have this if
we can find a $\!\cP\!$-set $A_3\subset A$
which is a union of members of the cohort under $B,$
and which meets both $A_1$ and $A_2.$

Since the graph associated with $A$ had at least $3$ vertices,
one of the two subgraphs we have obtained must have more than one
vertex; i.e., one of our two $\!\cP\!$-subsets,
say $A_1,$ must be non-autonomous.
Let $C$ be a $\!\cP\!$-set which overlaps $A_1.$
By Lemma~\ref{L.cup_aut}(iii), $C$ is also a union of
members of the cohort under $B.$
If $C$ also has nonempty intersection with $A_2,$
then we see that $A\cap C$ will be a $\!\cP\!$-subset $A_3$
of the sort we need.
On the other hand, if $C\cap A_2=\emptyset,$ then the
fact that $C\not\subseteq A_1$ implies $C\not\subseteq A,$
so $A\setminus C$ is a $\!\cP\!$-set, and will serve as our $A_3.$
\end{proof}

(Remark:  In in the above result, if $A$ is not autonomous, then
it will have a {\em unique} expression of the indicated sort.
If it {\em is} autonomous, then as noted, the expression as the
union of the singleton $\{A\}$ is always of the indicated sort;
in this case $A$ may or may not have another such expression.
Namely, we will see later that an autonomous
set $A$ may or may not be the union of
the cohort under it, and that if it is, the adjacency graph
for that decomposition may or may not be connected.)

To lead up to our criterion for the existence of an
ordering of $\Omega$ making all $\!\cP\!$-sets convex,
we look next at two conditions which (it is not hard to see)
are incompatible with the existence of such an ordering.

\begin{lemma}\label{L.adj_to_3}
If an autonomous set $A$ is adjacent to {\em more than} two
other autonomous sets, then every two autonomous
sets adjacent to $A$ are adjacent to one another.

In fact, when this is true, then
in the adjacency graph of the cohort containing $A,$
the connected component of $A$ has an edge between every two vertices.
\end{lemma}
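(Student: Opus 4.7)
The plan is to prove the first assertion by a short two-step application of the closure operations on $\cP$, then bootstrap to the global clique claim by re-applying the first assertion at each $B_i$ in place of $A$.

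As a preliminary, I would observe that any autonomous $\cP$-set $B$ adjacent to $A$ lies in the cohort of $A$: the decomposition of $A\cup B$ into maximal autonomous subsets given by Lemma~\ref{L.cup_aut}(i) must be exactly $\{A,B\}$, since any strictly larger autonomous piece containing $A$ would overlap $B$ and so violate the autonomy of $B$. Lemma~\ref{L.cup_aut}(ii) then places $A$ and $B$ in a common cohort. Consequently the $k\ge 3$ distinct autonomous neighbors $B_1,\dots,B_k$ of $A$ all share a cohort with $A$ and are therefore pairwise disjoint.

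For the first assertion, take three such neighbors $B_1,B_2,B_3$. The $\cP$-sets $A\cup B_1$ and $A\cup B_2$ overlap in $A$ (nonempty intersection, and neither contains the other since $B_1,B_2$ are nonempty and disjoint), so their union $A\cup B_1\cup B_2$ lies in $\cP$; this set in turn overlaps $A\cup B_3$ in $A$, so
\[
(A\cup B_1\cup B_2)\setminus(A\cup B_3)\;=\;B_1\cup B_2
\]
lies in $\cP$. Since $B_1\cap B_2=\emptyset$, this exhibits $B_1$ and $B_2$ as adjacent.

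For the second assertion, the induced subgraph on $\{A\}\cup\{B_1,\dots,B_k\}$ is already complete, by the first assertion combined with the adjacencies of each $B_i$ to $A$. To show there is no further vertex in the connected component of $A$, I would apply the first assertion to each $B_i$ in place of $A$: $B_i$ has at least $k\ge 3$ autonomous neighbors (namely $A$ together with the $B_j$ for $j\ne i$, which are its neighbors by the first assertion), so any autonomous neighbor $X$ of $B_i$ must be adjacent to $A$, whence $X\in\{A\}\cup\{B_1,\dots,B_k\}$. The main obstacle is the overlap bookkeeping at each closure step, but it reduces to the disjointness of the $B_i$'s once one has the preliminary cohort observation.
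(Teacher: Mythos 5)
Your proof is correct and follows essentially the same route as the paper: the first assertion via forming $A\cup B_1\cup B_2$ and subtracting the overlapping $\cP$-set $A\cup B_3$ to exhibit $B_1\cup B_2\in\cP$, and the second by re-applying the first assertion at the neighbors of $A$ to show the component cannot extend further. The only differences are cosmetic (you make explicit the common-cohort/disjointness bookkeeping, and phrase the second step as closure of the vertex set under taking neighbors rather than as an induction along the component).
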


\begin{proof}
Let $B$ be the least autonomous set properly containing $A;$
thus, the lemma concerns the cohort under~$B.$

To get the first assertion, we first consider any
two autonomous sets $C\neq C'$ adjacent to $A.$
By the adjacency assumption, $A\cup C$ and $A\cup C'$
are $\!\cP\!$-sets, and they clearly overlap,
so their union $A\cup C\cup C'$ is a $\!\cP\!$-set.
Now by our hypothesis there is at least one other autonomous
set $C''$ adjacent to $A,$ so $A\cup C''$ is a $\!\cP\!$-set
which overlaps $A\cup C\cup C';$
hence $(A\cup C\cup C')\setminus (A\cup C'') = C\cup C'$
is a $\!\cP\!$-set, i.e., $C$ and $C'$ are adjacent, as claimed.

To get the second assertion, let us first show that
in the situation of the above paragraph, if $C$
is adjacent to yet another member $D$ of the cohort under $B,$
then $A$ is also adjacent to $D.$
Indeed, since $C$ is adjacent to the three
sets $A,$ $C'$ and $D,$ the first assertion of the lemma shows
that these are adjacent to one another,
in particular, $A$ and $D$ are adjacent.
By induction, all members of the connected component in question
are adjacent to $A,$
hence by yet another application of the first assertion
of the lemma, they are all adjacent to one another.
\end{proof}

\begin{lemma}\label{L.cycle}
Suppose $A_1,\dots,A_n$ $(n\geq 3)$ are distinct autonomous sets
forming a ``cycle of adjacency'', i.e.,
such that $A_i$ is adjacent to $A_{i+1}$ $(1\leq i<n),$
and $A_n$ to $A_1.$
Then, again, in the adjacency graph of the cohort containing
the $A_i,$ the connected component containing
these elements has an edge between every two vertices.
\end{lemma}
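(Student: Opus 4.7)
The plan is to reduce to Lemma~\ref{L.adj_to_3}: once some $A_i$ is shown to have three distinct autonomous neighbors, that lemma forces its entire connected component to be a clique, and this component contains all the $A_j$ by hypothesis (they are linked through the cycle). Each $A_i$ already has $A_{i-1}$ and $A_{i+1}$ as neighbors, so for $n\ge 4$ it will suffice to exhibit one further neighbor of $A_1$; I will show that $A_1$ and $A_3$ are adjacent.

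To get this extra adjacency I will construct two $\cP$-sets whose intersection is exactly $A_1\cup A_3$. First, the two cycle-adjacencies $A_1\cup A_2$ and $A_2\cup A_3$ overlap (sharing $A_2$, and incomparable by pairwise disjointness of members of a common cohort), so their union $X:=A_1\cup A_2\cup A_3$ lies in $\cP$. Iterating the same overlap-and-union construction along the opposite arc of the cycle produces $A_3\cup A_4\cup\dots\cup A_n\in\cP$, and this overlaps $A_n\cup A_1$ in $A_n$, giving $Y:=A_1\cup A_3\cup A_4\cup\dots\cup A_n\in\cP$. For $n\ge 4$ the sets $X$ and $Y$ are incomparable (since $A_2\in X\setminus Y$ and $A_4\in Y\setminus X$) and intersect in $A_1\cup A_3$, so that intersection is a $\cP$-set; since $A_1$ and $A_3$ are disjoint, this proves they are adjacent.

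Now $A_1$ is adjacent to the three distinct autonomous sets $A_2,A_3,A_n$, and Lemma~\ref{L.adj_to_3} finishes the case $n\ge 4$. The base case $n=3$ needs a short separate argument: the three sets are already pairwise adjacent, so if the connected component of $A_1$ consists only of $A_1,A_2,A_3$ it is already a triangle; otherwise a shortest path from a further vertex back to $\{A_1,A_2,A_3\}$ ends at some $A_i$ whose immediate predecessor on the path is a fourth neighbor outside $\{A_1,A_2,A_3\}$, and Lemma~\ref{L.adj_to_3} again applies.

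The principal obstacle is bookkeeping: at each step one must check that the two $\cP$-sets being combined genuinely overlap in the sense of Definition~\ref{D.patchw}, rather than being nested or disjoint. Pairwise disjointness of the $A_i$ (from Lemma~\ref{L.2aut} applied within their common cohort) together with the hypothesis $n\ge 4$ at the final merger of $X$ and $Y$ are precisely what make every application of the patchwork operations legitimate.
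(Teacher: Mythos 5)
Your proposal is correct and follows essentially the same route as the paper: for $n\geq 4$ it forms the overlapping $\cP$-sets $A_1\cup A_2\cup A_3$ and $A_1\cup A_3\cup A_4\cup\dots\cup A_n$, intersects them to get the extra adjacency $A_1$--$A_3$, and then invokes Lemma~\ref{L.adj_to_3}; the $n=3$ case is handled identically by finding a third neighbor of some $A_i$ via connectedness. The only differences are cosmetic (you apply Lemma~\ref{L.adj_to_3} at $A_1$ rather than $A_3$, and you spell out the overlap checks the paper leaves implicit).
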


\begin{proof}
First suppose $n\geq 4.$
Then $A_1\cup A_2\cup A_3$ and
$A_3\cup\dots\cup A_n\cup A_1$ will be overlapping $\!\cP\!$-sets,
hence their intersection, $A_1\cup A_3$ is a $\!\cP\!$-set; i.e.,
$A_1$ and $A_3$ are adjacent.
Hence $A_3$ is adjacent to at least the three sets
$A_1,$ $A_2$ and $A_4,$ so the preceding lemma is applicable.

On the other hand, suppose $n=3.$
If the connected component in question
is simply $\{A_1,\,A_2,\,A_3\},$ we are done.
If not, then by connectedness,
some other member $C$ of that connected component
must be adjacent to one of the $A_i.$
This again makes that $A_i$ adjacent to three distinct
members of the component, so again, the preceding lemma is applicable.
\end{proof}

We can now prove

\begin{theorem}\label{T.i-iii}
Let $\cP$ be a finite patchwork, and $B\in\cP$ an autonomous set.
Then exactly one of the following statements is true:\\[.2em]
\textup{(i)}~ The cohort under $B$ has at least $3$ members,
every two members of that cohort are adjacent,
and the union of the cohort is all of $B.$ \\[.2em]
\textup{(ii)}\, The cohort under $B$ has at least $2$ members,
the adjacency graph of that cohort is a path
\textup{(}i.e., consists of elements $A_1,\dots,A_n$ for some $n\geq 2,$
with $A_i$ and $A_j$ adjacent if and only if $|i-j|=1),$
and again, the union of the cohort is all of $B.$\\[.2em]
\textup{(iii)}  No two members of the cohort under $B$ are adjacent.
\textup{(}In this case, $B$ may contain non-cohort elements.
If it does,
the cohort under $B$ may be empty or nonempty.\textup{)}\vspace{.2em}
\end{theorem}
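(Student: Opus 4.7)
My plan is to dichotomize on whether the cohort under $B$ contains any adjacent pair. If no pair is adjacent, case~(iii) holds. Otherwise, fix cohort members $C_1$ and $C_2$ with $C_1\cup C_2\in\cP.$ I will prove \textbf{(a)} that the adjacency graph $\Gamma$ of the cohort is connected and that the cohort's union is $B,$ and then \textbf{(b)} classify $\Gamma$ using Lemmas~\ref{L.adj_to_3} and~\ref{L.cycle}.

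For~\textbf{(a)}, let $H$ be the connected component of $\Gamma$ containing $C_1,$ and set $U_H=\bigcup_{C\in H}C.$ Lemma~\ref{L.adj-graph} makes $U_H$ a $\!\cP\!$-set. Suppose first that $U_H$ is autonomous. Since it properly contains the cohort member $C_1,$ the maximality of $C_1$ as an autonomous proper subset of $B$ forces $U_H=B.$ But then every cohort member $C$ lies in $U_H,$ and any $C\notin H$ is disjoint from all members of $H$ (cohort members being pairwise disjoint by Lemma~\ref{L.2aut} together with maximality), hence would have to be empty---impossible. So $H$ is the entire cohort, and $\bigcup\mathrm{cohort}=B.$

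Suppose instead $U_H$ is non-autonomous. Then it overlaps some $\!\cP\!$-set $A'.$ By the same maximality argument, no autonomous set lies strictly between $U_H$ and $B,$ so $B$ is the least autonomous set properly containing $U_H.$ Lemma~\ref{L.cup_aut}(iii) then makes $B$ also the least autonomous set properly containing $A',$ and parts~(i)--(ii) of that lemma express $A'$ as a union of cohort members under $B.$ From the overlap of $A'$ with $U_H,$ combined with pairwise disjointness of cohort members, one reads off that $A'$ contains at least one cohort member in $H$ and at least one outside $H.$ But Lemma~\ref{L.adj-graph} applied to $A'$ (which is non-autonomous) forces its cohort-member decomposition to have connected adjacency graph, yielding a $\Gamma$-edge from $H$ to its complement---contradicting that $H$ is a connected component. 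So this case cannot occur, and we revert to the previous one.

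Now $\Gamma$ is connected and $\bigcup\mathrm{cohort}=B.$ For~\textbf{(b)}, apply Lemma~\ref{L.adj_to_3} (if some vertex of $\Gamma$ has degree $\geq 3$) or Lemma~\ref{L.cycle} (if $\Gamma$ contains a cycle) to conclude that $\Gamma$ is complete on its $\geq 3$ vertices, giving~(i). Otherwise $\Gamma$ is a connected acyclic graph of maximum degree $\leq 2,$ hence a path on $\geq 2$ vertices (it contains the edge $C_1C_2$), giving~(ii). The three cases are pairwise exclusive, since~(iii) has no edge while~(i) and~(ii) do, and a path on $\geq 3$ vertices has non-adjacent endpoints so is not complete. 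I expect the main subtlety to be the overlap-to-connectedness step in~\textbf{(a)}: carefully translating ``$A'$ overlaps $U_H$'' into cohort members of $A'$ on both sides of $H,$ where pairwise disjointness of cohort members is essential.
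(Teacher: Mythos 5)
Your proof is correct and follows essentially the same route as the paper's: form the union of a connected component of the cohort's adjacency graph, show via Lemma~\ref{L.adj-graph} and Lemma~\ref{L.cup_aut} that it must be autonomous and hence equal to $B,$ then classify the graph with Lemmas~\ref{L.adj_to_3} and~\ref{L.cycle}. The only (harmless) divergence is in refuting the non-autonomous case: the paper contradicts maximality of the connected component by forming $U_H\cup A',$ whereas you extract a $\Gamma$-edge leaving $H$ from the connectedness of $A'$'s own cohort decomposition -- the same idea in a slightly different order.
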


\begin{proof}
Consider the adjacency graph of the cohort under $B.$
If this has any edges, then the members of that cohort
comprising a nontrivial connected component of that
graph will have as union a $\!\cP\!$-subset $A\subseteq B$
by Lemma~\ref{L.adj-graph}.
If this $A$ were not autonomous, then Lemma~\ref{L.cup_aut}(iii)
would give a $\!\cP\!$-set $A'$ overlapping it,
which by parts~(ii) and~(i) of that lemma would also
be a union of members of the cohort under $B;$ hence
$A\cup A'$ would be a larger $\!\cP\!$-subset of $B,$ and by
Lemma~\ref{L.adj-graph} the adjacency graph of that larger
$\!\cP\!$-set would be a larger connected subgraph of the adjacency
graph of $B,$ a contradiction.
So $A$ is autonomous.

But as an autonomous $\!\cP\!$-subset of $B$ which
properly contains members of the cohort under $B,$ $A$ must equal $B.$
So we conclude that if the adjacency graph of the cohort under $B$ has
any edges, then it is connected and $B$ is the union of
that cohort; in particular, $B$ has no non-cohort elements.

By Lemmas~\ref{L.adj_to_3} and~\ref{L.cycle}, if the above
adjacency graph has either any ``branching'' or any cycles,
it must have an edge between every pair of points, and since
it will also have at least three vertices, it falls under case~(i).
On the other hand, a connected graph with at least one edge
which has neither branching nor cycles is a path, so if that is so,
we are in case~(ii).
Finally, if the adjacency graph of $B$ has no edges, we are
in case~(iii).

(As noted earlier, a minimal autonomous set $B$ has empty cohort under
it, and so necessarily has non-cohort elements.
For examples of autonomous sets having both a nonempty cohort under
it and
non-cohort elements, the reader can examine the case where $\Omega$ is a
totally ordered set, and $\cP$ is the patchwork
generated by an interval $B$
in $\Omega$ and one or more pairwise disjoint subintervals of $B$
whose union is not all of $B.)$
\end{proof}

We can now answer question~\eqref{d.FW_q} for finite patchworks.

\begin{theorem}\label{T.cnvx}
The following three conditions on a finite patchwork
$\cP$ are equivalent:\\[0.2em]
\textup{(a)} $\Omega$ admits a total ordering $\leq$
under which all $\!\cP\!$-sets are convex.\\[0.2em]
\textup{(b)} No three nonempty $\!\cP\!$-sets
are pairwise adjacent.\\[0.2em]
\textup{(c)} Every autonomous set $B\subseteq\Omega$ falls under
case~\textup{(ii)} or \textup{(iii)} of
Theorem~\ref{T.i-iii}.
\end{theorem}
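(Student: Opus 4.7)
My plan is to prove the cyclic chain (a)$\implies$(b)$\implies$(c)$\implies$(a), exploiting the fact that the hard directions have been ``preprocessed'' by Theorem~\ref{T.i-iii} and the structural lemmas.

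For (a)$\implies$(b), I would argue that if $A_1,\,A_2,\,A_3$ are three pairwise adjacent nonempty $\!\cP\!$-sets under a total ordering making all $\!\cP\!$-sets convex, then they are pairwise disjoint, convex, and have pairwise convex unions. Two disjoint nonempty convex sets whose union is convex must be linearly comparable in the ``entirely to the left of'' relation; so the three $A_i$ are linearly ordered by this relation, but then whichever one lies in the middle separates the other two, contradicting convexity of the outer union.

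For (b)$\implies$(c), I would just invoke Theorem~\ref{T.i-iii}: case~(i) exhibits three pairwise adjacent members of a single cohort, violating~(b), so every autonomous set must fall in case~(ii) or~(iii).

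The real content is (c)$\implies$(a), which I would prove by constructing the ordering recursively down the tree of autonomous sets. At each autonomous $B$ I arrange the maximal autonomous proper subsets of $B$ (its cohort) together with the non-cohort elements of $B$ into a single finite linear sequence: in case~(ii), the cohort is placed in the path order $A_1,\dots,A_n$ (with no non-cohort elements to worry about, since the cohort exhausts $B$); in case~(iii), cohort members and non-cohort elements are listed in any order. Then I recursively apply the same recipe inside each cohort member, and finally order non-cohort singletons arbitrarily. By construction each autonomous set sits as a contiguous block inside its parent, so autonomous $\!\cP\!$-sets are automatically convex. The main obstacle is verifying convexity of a non-autonomous $\!\cP\!$-set $A;$ here I would let $B$ be the least autonomous set properly containing $A,$ use Lemma~\ref{L.cup_aut}(i)-(ii) to write $A$ as a union of members of the cohort under $B,$ and use Lemma~\ref{L.adj-graph} to conclude that the corresponding subfamily spans a connected subgraph of the adjacency graph of that cohort. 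Case~(iii) is excluded, since its adjacency graph has no edges and a connected non-empty subgraph would be a single vertex, forcing $A$ to equal one cohort member and hence to be autonomous. So we are necessarily in case~(ii), and a connected subgraph of a path is a consecutive run $A_i,\dots,A_j;$ under the ordering I constructed these blocks are placed adjacent to each other with nothing else between them, so $A$ is convex, finishing the proof.
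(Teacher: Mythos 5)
Your proposal is correct and follows essentially the same route as the paper: the same easy arguments for (a)$\implies$(b) and (b)$\implies$(c), and the same recursive construction down the tree of autonomous sets for (c)$\implies$(a), with your use of Lemma~\ref{L.adj-graph} merely making explicit the convexity verification that the paper leaves as ``we see that.''
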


\begin{proof}
(a)$\implies$(b) is intuitively clear.
To supply some details: if a convex set $A$ is adjacent to
each of a pair of disjoint convex sets $A'$ and $A'',$ then one
of $A'$ and $A''$ must lie above it and the other below it under $\leq,$
otherwise $A'$ and $A''$ could not be disjoint.
Since $A'$ and $A''$ have the nonempty
set $A$ between them, their union is not
convex, so they are not adjacent.

(b)$\implies$(c) is immediate, since if some $B$
fell under case~(i) of Theorem~\ref{T.i-iii}, the
cohort under it would include three pairwise adjacent sets.

To prove (c)$\implies$(a) we shall, assuming~(c),
construct recursively an ordering $\leq$ as in~(a).

Given an autonomous set $B\subseteq\Omega,$ assume recursively
that for each $A$ in the cohort under $B,$ we have constructed
a total ordering on $A$ under which all $\!\cP\!$-sets contained
in $A$ are convex.
(If $B$ is minimal, the cohort under it is empty, and this assumption is
vacuous.)

If $B$ falls under case~(ii) of Theorem~\ref{T.i-iii},
then writing the members of the cohort under $B$ as
$A_1,\dots,A_n,$ as in that theorem, we order their union
$B$ so that $A_1,\dots,A_n$ form {\em successive} intervals of $B$
(either putting $A_i$ below $A_{i+1}$ for all $i,$ or
putting $A_i$ above $A_{i+1}$ for all $i),$
with each ordered internally
by the order previously constructed for it.

On the other hand, if $B$ falls under case~(iii),
we can order it by arranging the members of the cohort under
it (if any)
in any order, simply making each a convex subset of $B,$ again
with the internal ordering previously constructed,
and likewise put the non-cohort elements of $B$
(if any) in any order above, below or between those
intervals (but not, of course, within any of them).

We see that for every $\!\cP\!$-set $A,$ if $B$ is the least
autonomous $\!\cP\!$-set containing it,
then $A$ becomes convex under the ordering so constructed on $B,$
and remains so as we extend this ordering to larger autonomous sets.
Hence when our construction reaches
the top set, $\Omega,$ we have established~(a).
\end{proof}

Looking back at Theorem~\ref{T.i-iii}, we can turn this upside down, and
describe, somewhat informally, how to construct all finite patchworks:

Start with a finite partially ordered set $T$ having the form of
a downward-branching tree, with greatest element denoted $\Omega.$
Choose any graph structure on the elements of $T$
such that edges, if any, occur only among elements of $T$
that lie immediately below a common element, and such that for
each element, the resulting
graph structure on the elements immediately
below it is either a complete graph, a path, or edgeless.
Now assign to each element of $T$ such that the graph
structure we have given to the set of elements immediately below it
is edgeless a set that is to be its set of non-cohort
elements, assigning to distinct elements of $T$ disjoint sets,
and making the set so assigned to every such
element of $T$ having $\leq 1$ elements of $T$ immediately
below it nonempty.
Let each element of $T$ become the name of the union
of the sets of non-cohort elements assigned to it and
to elements of $T$ {\em anywhere below it}, and call the
sets so named {\em autonomous sets}.
(Apologies for this informal step of turning
abstract elements into names for sets!)
In particular, $\Omega$ is the name assigned to the set of
{\em all} elements that have been introduced as non-cohort elements.
Finally, let the members of $\cP$ be the unions of those families
of autonomous sets which, under our graph structure on $T,$
form connected subgraphs.

\section{The case of not-necessarily-finite $\cP.$}\label{S.nonfin}

Returning to Theorem~\ref{T.cnvx}, we can combine it with
Lemma~\ref{L.inf<fin} and deduce:

\begin{theorem}\label{T.cnvx-inf}
Let $\Omega$ be a set, and $\cP$ a
\textup{(}not necessarily finite\textup{)} patchwork
of subsets of $\Omega.$
Then \textup{(}as in Theorem~\ref{T.cnvx}\,\textup{)},
the following conditions are equivalent.\\[.2em]
\textup{(a)} $\Omega$ admits a total ordering $\leq$
under which all $\!\cP\!$-sets are convex.\\[0.2em]
\textup{(b)} No three nonempty $\!\cP\!$-sets are pairwise adjacent.
\end{theorem}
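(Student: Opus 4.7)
The direction (a)$\implies$(b) requires no new work: the same informal argument given for Theorem~\ref{T.cnvx} applies verbatim, since it uses nothing about finiteness. Three pairwise adjacent nonempty convex sets in any totally ordered $\Omega$ produce an immediate contradiction, as already sketched.

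For (b)$\implies$(a), the plan is to use Lemma~\ref{L.inf<fin} to reduce to the finite case, then invoke Theorem~\ref{T.cnvx}. Specifically, I would apply Lemma~\ref{L.inf<fin} with $\cC = \cP$: it suffices to show that for every finite subset $\cC' \subseteq \cP,$ there exists a total ordering of $\Omega$ under which every member of $\cC'$ is convex.

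Given such a finite $\cC',$ I would form the patchwork $\cP'$ on $\Omega$ generated by $\cC'.$ By Lemma~\ref{L.finite}, $\cP'$ is finite. Moreover, since $\cP$ is itself a patchwork containing $\cC',$ the minimality of $\cP'$ gives $\cP' \subseteq \cP.$ The key observation -- and the one substantive step -- is that condition~(b) is inherited by sub-patchworks: if $A, B \in \cP'$ are disjoint with $A \cup B \in \cP',$ then $A \cup B \in \cP$ as well, so adjacency in $\cP'$ implies adjacency in $\cP.$ Consequently, three nonempty pairwise adjacent members of $\cP'$ would be three nonempty pairwise adjacent members of $\cP,$ contradicting~(b) for $\cP.$ Hence $\cP'$ satisfies condition~(b) of Theorem~\ref{T.cnvx}.

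Applying Theorem~\ref{T.cnvx} to the finite patchwork $\cP',$ I would obtain a total ordering of $\Omega$ under which every member of $\cP',$ and in particular every member of $\cC',$ is convex. This verifies the hypothesis of Lemma~\ref{L.inf<fin}, yielding~(a). The only potential stumbling block is the sub-patchwork observation just highlighted, and even that is essentially immediate once one notes the right inclusion $\cP' \subseteq \cP$; there is no genuine new difficulty beyond combining the two earlier results.
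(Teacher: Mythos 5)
Your proposal is correct and follows essentially the same route as the paper: reduce to the finite sub-patchwork generated by a finite subset of $\cP,$ note that condition (b) is inherited (since adjacency in the sub-patchwork implies adjacency in $\cP$), apply Theorem~\ref{T.cnvx}, and finish with Lemma~\ref{L.inf<fin} taking $\cP$ in the role of $\cC.$ The only difference is that you spell out the inheritance of (b) a little more explicitly than the paper does, which is harmless.
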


\begin{proof}
(a)$\implies$(b) is clear, as in the proof of Theorem~\ref{T.cnvx}.

Conversely, assume~(b) holds.
Then the same condition holds for all finite
sub-patchworks $\cP'\subseteq\cP,$ hence by
Theorem~\ref{T.cnvx}, for each such $\cP'$ there exists
an ordering $\leq_{\cP'}$ making all elements of $\cP'$ convex.
Since every finite subset $\cC'\subseteq\cP$ generates a finite
sub-patchwork $\cP',$ an ordering $\leq_{\cP'}$ which makes
all members of $\cP'$ convex will do the same for
$\cC';$ so we can now apply
Lemma~\ref{L.inf<fin} (with $\cP$ in the role of $\cC)$
to get an ordering $\leq$ as in~(a), completing the proof.
\end{proof}

We could not bring condition~(c) of Theorem~\ref{T.cnvx}
into Theorem~\ref{T.cnvx-inf}, because
as one goes from one finite patchwork $\cP_0$
on $\Omega$ to a larger one, sets that
were autonomous can cease to be so, and this may leave us with no
proper autonomous sets in the infinite set $\cP.$
For two contrasting examples, let $\Omega$ be the real line,
let $\cP_1$ consist of $\emptyset,$ $\Omega,$ and all finite
half-open intervals
$[a,\,b),$
and let $\cP_2$ consist of all finite unions of members of $\cP_1.$
Both these families are patchworks, and it is easy to see
that neither has any autonomous elements other than $\Omega.$
The sets in $\cP_1$ are convex under
the standard ordering on $\Omega,$ but
there can be no ordering under which the sets in $\cP_2$
are convex, since
condition~(b) fails: {\em every} two disjoint nonempty
members of $\cP_2$ are adjacent in that family, so any three such
sets show the failure of~(b).
\vspace{.5em}

It would be interesting to know whether the kind of structures
we have called patchworks are useful in other contexts
than the study of convex subsets of ordered sets; in
particular, whether patchworks that do not satisfy the conditions
for convexity under an ordering on $\Omega$ occur in any natural way.

We remark that in defining patchworks, rather than making them
families of subsets of a set $\Omega,$ we could, with slightly
greater formal generality, have made them families of subsets
of a Boolean ring.
The ``subsets of a set'' definition seemed simplest for our
purposes, but subsets of a Boolean ring might be better in another.

\section{The cardinality of a patchwork generated by $n$ elements}\label{S.size}
We noted in Lemma~\ref{L.finite} that every finitely generated
patchwork is finite.
Let us get some more precise bounds.

The proof of the next
result uses an observation that we have not yet
stated explicitly: any patchwork $\cP$ is closed under
pairwise intersections, and hence under all finite intersections.
Indeed, by definition $\cP$ is closed under intersections of
overlapping elements, while if two elements are comparable,
their intersection is one of them, and if they are disjoint,
their intersection is $\emptyset\in\cP.$
This gives us all pairwise intersections, and hence, as noted, all
finite intersections.

\begin{proposition}\label{P.size}
If $n$ is a nonnegative integer, $\Omega$ a set, and $\cP$
a patchwork generated by $n$ subsets of $\Omega,$ then $\cP$
has at most $2^{2^n-1}+1$ elements.
Moreover, for all $n\neq 2,$ there exist examples achieving this bound.
\end{proposition}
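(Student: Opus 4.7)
The plan is to split the proposition into the upper bound and the achievability claim.

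For the upper bound, the key observation is that no $\!\cP\!$-set other than $\Omega$ itself extends outside $\bigcup\cC$: each generator is a subset of $\bigcup\cC$, the three patchwork operations $\cap,\cup,\setminus$ preserve the property ``contained in $\bigcup\cC$,'' and $\Omega$ never arises from an operation, since no proper $\!\cP\!$-subset overlaps $\Omega$. By Lemma~\ref{L.finite}, $\cP$ sits inside the Boolean ring generated by $\cC$, whose atoms are the at most $2^n$ nonempty sets of the form $\bigcap_{i\in S}C_i\cap\bigcap_{i\notin S}(\Omega\setminus C_i)$ for $S\subseteq\{1,\dots,n\}$, exactly one of which (the atom for $S=\emptyset$) lies outside $\bigcup\cC$. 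Hence every $\!\cP\!$-set except $\Omega$ is a union of some subset of the at most $2^n-1$ ``interior'' atoms, giving at most $2^{2^n-1}$ possibilities, and $|\cP|\le 2^{2^n-1}+1$.

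For achievability, the cases $n=0,1$ are immediate ($|\cP|=2$ and $3$ respectively, via $\cC=\emptyset$ and via any $\emptyset\subsetneq C_1\subsetneq\Omega$). For $n\ge 3$ I would take $\Omega=\{0,1\}^n\cup\{\omega\}$ with $\omega$ a new point outside the cube, and $C_i=\{x\in\{0,1\}^n:x_i=1\}$; then the $2^n-1$ interior atoms are the singletons $\{\mathbf{1}_T\}$ indexed by nonempty $T\subseteq\{1,\dots,n\}$, and $\Omega\ne\bigcup\cC$. Since $\cP$ is closed under finite intersections (observed in the paragraph preceding the proposition) and $\bigcup\cC\in\cP$ (obtained by iterated overlap-unions of the $C_i$'s), it will suffice to show that for every interior atom $a$ the ``puncture'' $\bigcup\cC\setminus a$ is a $\!\cP\!$-set: intersecting such punctures over any prescribed collection of atoms yields $\bigcup\cC$ minus that collection, and so every one of the $2^{2^n-1}$ unions of interior atoms is realized.

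The main obstacle is producing each puncture using only $n\ge 3$ generators. For the central atom $a=C_1\cap\cdots\cap C_n$, the identity
\[
\bigcup\cC\setminus a \;=\; \bigcup_{k=1}^{n}\Bigl(\bigl(\bigcup_{i\ne k}C_i\bigr)\setminus C_k\Bigr)
\]
works: each summand is an overlap-difference of two $\!\cP\!$-sets, and for $n\ge 3$ any two summands $(\bigcup_{i\ne k}C_i)\setminus C_k$ and $(\bigcup_{i\ne k'}C_i)\setminus C_{k'}$ share the atom $\{\mathbf{1}_{\{k''\}}\}$ for any $k''\notin\{k,k'\}$, so the successive unions are overlap-unions. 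For an atom indexed by a proper nonempty $T\subsetneq\{1,\dots,n\}$, I would analogously assemble $\bigcup\cC\setminus a$ from overlap-unions of bridging sets such as $(C_i\setminus C_j)\cup C_\ell$ with $i,j\in T$ and $\ell\notin T$, the spare index $\ell$ supplying the common atoms that bridge otherwise-disjoint pieces. The failure at $n=2$ is traceable exactly to this bridging step: with $T=\{1,2\}$ no spare index is available, and so the disjoint atoms $\{\mathbf{1}_{\{1\}}\}$ and $\{\mathbf{1}_{\{2\}}\}$ cannot be joined, leaving the hypothetical ninth $\!\cP\!$-set $\{\mathbf{1}_{\{1\}},\mathbf{1}_{\{2\}}\}$ unrealizable.
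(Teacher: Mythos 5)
Your upper-bound argument is essentially the paper's: both embed $\cP\setminus\{\Omega\}$ in the part of the Boolean ring generated by $\cC$ that lies inside $\bigcup\cC$ (the paper phrases this via the nonunital subring of the free Boolean ring generated by the free generators; you phrase it via the $\le 2^n-1$ atoms other than the complement of $\bigcup\cC$), and both observe that the partial operations cannot escape $\bigcup\cC$ while $\Omega$ participates in no operation. For achievability you use the same example (the cube $\{0,1\}^n$ with the coordinate half-cubes; your extra point $\omega$ is redundant since $\mathbf{0}$ already witnesses $\Omega\neq\bigcup\cC$), but a genuinely different verification: the paper shows each singleton $\{\mathbf{1}_T\}$ ($T\neq\emptyset$) is a $\cP$-set, proves enough pairwise adjacencies to invoke Lemmas~\ref{L.adj_to_3} and~\ref{L.adj-graph}, and concludes that every union of these singletons is a $\cP$-set; you instead realize every union of interior atoms as a finite intersection of the ``punctures'' $\bigcup\cC\setminus\{\mathbf{1}_T\}$, using only closure under finite intersections and explicit overlap-unions. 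Your route is more self-contained (it bypasses the adjacency machinery of \S\ref{S.fin_cP} entirely), at the cost of having to build each puncture by hand; the paper's route leans on its structure theory and gets all $2^{2^n-1}$ sets at once from a single adjacency computation. The one soft spot in your write-up is that the puncture for a proper nonempty $T$ with $|T|\ge 2$ is only sketched (``I would analogously assemble\dots''); the idea is sound -- the sets $(C_i\setminus C_j)\cup C_{\ell}$ with $i\neq j\in T$ and a fixed $\ell\notin T$ all avoid $\mathbf{1}_T$, pairwise overlap because they share $C_{\ell}$ while each contributes points such as $\mathbf{1}_{\{i\}}$ or $\mathbf{1}_{\{i,j'\}}$ missing from the others, and together with the remaining $C_{\ell'}$ $(\ell'\notin T)$ union to exactly $\bigcup\cC\setminus\{\mathbf{1}_T\}$ -- but those overlap checks should be written out for the argument to be complete. (Also note that for $|T|=1$ the puncture is simply $\bigcup_{\ell\neq i}C_{\ell}$, with no bridging needed.)
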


\begin{proof}
Let $\mathcal{B}(n)$ be the free Boolean ring on $n$ generators
\cite[Theorem~26, p.260, and Corollary~1]{frBA}.
This can be described as the Boolean ring of all subsets
of the set $2^n$ of subsets of $\{1,\dots,n\};$ its free
generators are the sets $X_1,\dots,X_n,$ where $X_i$ is
the set of those subsets containing $i.$
Thus $\mathcal{B}(n)$ has cardinality $2^{2^n}.$
The nonunital subring $\mathcal{B}(n)^\r{o}$
of $\mathcal{B}(n)$ generated by
the $X_i$ consists of those members
of $\mathcal{B}(n)$ which, like $X_1,\dots,X_n,$ do
not have as a member the empty subset of $\{1,\dots,n\};$
so it has cardinality $2^{2^n-1}.$

Given any $n$ subsets $A_1,\dots,A_n$ of a set $\Omega,$
we can map the free Boolean
ring $\mathcal{B}(n)$ homomorphically into the Boolean ring of
all subsets of $\Omega$ so as to send each $X_i$ to $A_i.$
It is not hard to see that the partial binary operations
of Definition~\ref{D.patchw} carry the image of
$\mathcal{B}(n)^\r{o}$ into itself, so the elements
of $\cP$ obtainable using those operations will lie
in that image, as will the element $\emptyset.$
The one member of $\cP$ that may not be contained in
that image is $\Omega,$ so counting this,
we see that $\cP$ must have cardinality $\leq 2^{2^n-1}+1.$

To get an example where this value is achieved,
let us take $\Omega$ to be $2^n,$ defined as above as the set
of all subsets of $\{1,\dots,n\},$ and examine
the structure of the patchwork $\cP$ on $\Omega$ generated
by the $X_i.$

Since every pair of $X_i$ overlap, $\cP$ will contain
all the sets $X_i\setminus X_j$ $(i\neq j).$
I now claim that for any $a\in 2^n$
which contains some but not all of $1,\dots,n,$
the singleton $\{a\}$ is a $\!\cP\!$-set.
Indeed, it is the intersection of
those sets $X_i\setminus X_j$ such that $i\in a$ but $j\notin a.$
The singleton whose unique member is the improper subset,
$\{1,\dots,n\},$ is also in $\cP,$ being the intersection
of the $X_i$ themselves.
On the other hand, $\emptyset\in\Omega$
is not contained in any of the $X_i,$
so the singleton $\{\emptyset\}$ is not a $\!\cP\!$-set.

This gives us $2^n-1$ singleton autonomous sets; let us
now look at adjacency relationships among them.
I claim that if two elements $a\neq\,b\in\Omega$
are nonempty, and differ only in the presence or
absence of a single one of $1,\dots,n,$ then
$\{a,\,b\}\in\cP,$ i.e., $a$ and $b$ are adjacent.
Indeed, of the $n-1$ elements with respect to which $a$ and $b$ agree,
they must agree in {\em containing} at least one, otherwise one
of $a$ or $b$ would be empty.
If they also agree in {\em not} containing at least one, then,
imitating the trick of the preceding paragraph, we can obtain
$\{a,\,b\}$ as an appropriate intersection
of sets $X_i\setminus X_j,$ while if they contain
all $n-1$ elements at which they agree, then,
again using the idea of the preceding paragraph, we find that
$\{a,\,b\}$ will be an intersection of sets $X_i.$
Using these adjacency relations,
we can get a chain of adjacency from any
nonempty $a$ to the element $\{1,\dots,n\};$ so the
adjacency graph of these $2^n-1$ singleton autonomous
sets is connected.
Moreover, the $n$ elements of cardinality $n-1$ are all adjacent
to $\{1,\dots,n\},$ so if $n\geq 3,$ Lemma~\ref{L.adj_to_3} tells
us that every two of these singletons are adjacent; so the union of
every subfamily of these singletons is
a $\!\cP\!$-set, giving $2^{2^n-1}$ elements.
Bringing in $\Omega$ itself
(which is not such a union, because it contains
not only the $2^{n-1}$ nonempty subsets of $\{1,\dots,n\}$ but also
the empty subset), we have the asserted $2^{2^n-1}+1$ elements.

We assumed in the next-to-last sentence that $n\geq 3.$
Looking at lower values,
if $n=1,$ so that $\{1,\dots,n\}$ has a unique nonempty
subset $a,$ we have $2^{2^n-1}+1 = 3,$ and
$\cP$ indeed has $3$ elements, $\emptyset\subset\{a\}\subset\Omega.$
If $n=0,$ so that $\Omega=\{\{\emptyset\}\},$ the empty
set $\cC$ of subsets of $\Omega$ generates the patchwork
$\{\emptyset,\,\Omega\},$ which has cardinality $2 = 2^{2^n-1}+1.$

The case $n=2,$ on the other hand will fall under the next result.
(See discussion after the proof thereof.)
\end{proof}

\begin{proposition}\label{P.size_cnvx}
If $n$ is a nonnegative integer, $\Omega$ a set, and $\cP$
a patchwork generated by $n$ subsets of $\Omega$ whose members
are all convex under some ordering $\leq$ on $\Omega,$
then $\cP$ has at most $\binom{2n}{2} + 2 = 2n^2-n+2$ elements; and
for all $n$ there exist examples achieving this bound.
\end{proposition}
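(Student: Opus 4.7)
The plan has two parts: establish the upper bound $|\cP|\leq 2n^2-n+2$, and exhibit families attaining it for every~$n$.

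For the upper bound, I would observe that each of the $n$ generators $A_i$ contributes at most two ``boundary slots'' --- its left endpoint and its right endpoint, viewed as Dedekind cuts of $\Omega$ together with the inclusion status prescribed by the generator. By induction on the construction of a $\cP$-set via the patchwork operations, every nonempty $\cP$-set has left and right boundaries drawn from these $2n$ slots: for $A\cap B$ and $A\cup B$ (with $A,B$ overlapping) the slots are inherited from the arguments, while for a difference $A\setminus B$ a boundary of $B$ becomes the boundary of the result on the side where $B$ was removed. Once a slot is assigned the role of ``left'' or ``right'' boundary of a $\cP$-set, the inclusion of its underlying point is forced by the slot's type (left- or right-endpoint of a generator, with or without that endpoint) and the role. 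Consequently, each unordered pair of distinct slots determines at most one nonempty proper $\cP$-set: for slots at distinct positions the role is forced by the ordering, while if two slots happen to share a position a short case analysis on the inclusion constraints shows that the two possible role assignments cannot both yield a nonempty set. This gives at most $\binom{2n}{2}$ nonempty proper $\cP$-sets, and adding $\emptyset$ and $\Omega$ yields the bound.

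For the construction achieving the bound, I would take $\Omega = \R$ and $A_i = [i,\,n+i]$ for $i = 1,\dots,n$; the $2n$ slots are then the integers $1,\dots,2n$, with $\{1,\dots,n\}$ the left ones and $\{n+1,\dots,2n\}$ the right ones. For each pair $\{p,q\}$ with $p<q$ in $\{1,\dots,2n\}$, I would exhibit the corresponding $\cP$-set explicitly: if both $p,q\leq n$, it is $A_p\setminus A_q = [p,q)$; if both $p,q>n$, it is $A_{q-n}\setminus A_{p-n} = (p,q]$; and if $p\leq n<q$, it is $A_p\cap A_{q-n} = [p,q]$ when $q-p<n$, it is the generator $A_p$ itself when $q-p=n$, and it is the iterated union $A_p\cup A_{p+1}\cup\cdots\cup A_{q-n}=[p,q]$ when $q-p>n$, with overlap at each stage immediate from the definitions. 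These $\binom{2n}{2}$ intervals are pairwise distinct (distinguishable by their closedness types, and within each type by their endpoints) and are distinct from $\emptyset$ and from the unbounded set $\Omega=\R$, so $\cP$ has exactly $\binom{2n}{2}+2 = 2n^2-n+2$ elements.

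The main obstacle is the upper-bound analysis: the inductive tracking of slots under the patchwork operations --- especially set differences, which produce boundaries on the ``opposite side'' of a generator's endpoint --- and the verification that each unordered pair of slots yields at most one nonempty proper $\cP$-set even in the degenerate case where two slots share a position. Once that bookkeeping is in place, the counting and the explicit construction are both routine.
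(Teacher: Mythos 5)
Your proposal is correct and follows essentially the same route as the paper: the upper bound comes from assigning to each generator two cuts (your ``slots,'' with the inclusion status already encoded by the cut itself) and checking that the four partial operations on overlapping sets only recombine the cuts of their arguments, so every nonempty $\cP$-set other than $\Omega$ is pinned down by one of $\binom{2n}{2}$ pairs; and your extremal family of $n$ length-$\!n\!$ intervals each shifted by one is the same construction as the paper's (which uses $n$-element integer intervals $[-n+i,\,i-1]$ in $[-n,n]$), verified by a direct enumeration of the pairs rather than by showing the closure is all nonempty convex subsets of a $(2n-1)$-element set.
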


\begin{proof}
To prove the stated bound, suppose $(\Omega,\leq)$ is a totally ordered
set, and let us define a ``cut'' $\chi$ in $\Omega$ to be a pair
$(\chi^\ell,\,\chi^u)$ (the superscripts standing
for ``lower'' and ``upper'') such that $\Omega$ is the disjoint
union of $\chi^\ell$ and $\chi^u,$ and every element of $\chi^\ell$
is less than every element of $\chi^u.$
(So such a cut is equivalent to an isotone map
from $\Omega$ to $\{0,1\}.$
Note that $\chi^\ell$ or $\chi^u$ may be empty.)
Let us call a cut $\chi$ ``higher than'' a cut
$\chi'$ if $\chi^\ell\supset{\chi'}^\ell;$
clearly the cuts are totally ordered under this relation.

Observe that every nonempty convex subset $A$
of $\Omega$ is determined by two cuts, $\chi_{A\uparrow}$
and $\chi_{A\downarrow},$ the former characterized by the property that
$\chi_{A\uparrow}^u$ consists of all elements of $\Omega$ that are
greater than all elements of $A$ (equivalently, that
$\chi_{A\uparrow}^\ell$ consists of elements $\leq$ at least one
element of $A),$ the latter by the property that
$\chi_{A\downarrow}^\ell$ consists of the elements that are
less than all elements of $A$ (equivalently, that
$\chi_{A\downarrow}^u$ consists of the elements $\geq$ at least one
element of $A).$
Thus, $A=\chi_{A\uparrow}^\ell\cap \chi_{A\downarrow}^u.$

Given overlapping convex sets $A$ and $B,$ it is not hard to see
that for each of $A\cup B,$ $A\cap B,$ $A\setminus B$ and
$B\setminus A,$ the two cuts determining the set in question
are taken from among the
four cuts $\chi_{A\downarrow},$ $\chi_{A\uparrow},$
$\chi_{B\downarrow},$ and $\chi_{B\uparrow}.$
(Which pair of cuts correspond to each of these depends on whether
$A$ extends upward from $A\cap B$ and $B$ downward therefrom,
or vice versa.
For instance, in the former case, $A\setminus B$ is determined by
the two cuts $\chi_{A\uparrow}$ and $\chi_{B\uparrow}.)$

Hence if we start with a set $\cC$ of $n$ convex sets, those of them
that are nonempty determine at most $2n$ distinct cuts, and
applying our four partial binary operations recursively,
we get a set of nonempty convex sets each of which is determined by
one of the $\leq\binom{2n}{2}$ unordered pairs of distinct
cuts from this set.
Bringing in the sets $\Omega$ and $\emptyset$ (which may or
may not have belonged to $\cC,$ or, in the case of $\Omega,$
arisen via our partial operations), we see that the patchwork
generated by $\cC$ can have at most $\binom{2n}{2}+2$ elements.

Let us now show that for every $n,$ this upper bound can be achieved.
For integers $i\leq j,$ let us here write $[i,j]$ for
$\{k\in\mathbb{Z}\mid i\leq k\leq j\}.$
Let $\Omega$ be the set $[-n,\,n],$
under the usual ordering of the integers, and
let $\cC$ consist of the $n$ convex subsets
$A_i = [-n+i,\,i-1]$ $(i=1,\dots,n),$ each of which has $n$ elements.
We shall show below that the closure
$\cP$ of $\cC$ under the partial binary
operations in the definition of a patchwork consists of
{\em all} the nonempty convex subsets of $[-n+1,\,n-1].$
Note that the latter $\!2n{-}1\!$-element ordered set has $2n$ cuts (one
``below'' the least element, $2n-2$ between successive
elements, and one ``above'' the greatest element),
hence it has $\binom{2n}{2}$ nonempty convex subsets;
once we know that our partial binary operations give
us all these sets, then bringing in $\Omega$
(which, since it contains $-n$
and $n,$ is not in that closure) and $\emptyset,$
it will follow that the patchwork generated by $\cC$
indeed has $\binom{2n}{2}+2$ elements.

To show that we get all convex sets
$[i,\,j]$ with $-n+1\leq i\leq j\leq n-1,$
it will suffice to show that we get those of
the forms $[i,\,n-1]$ and $[-n+1,\,j],$
since $[i,\,j]=[i,\,n-1]\cap [-n+1,\,j].$
To get $[i,\,n-1],$ note that if $i<0,$ that set is the
union of the overlapping sets $[i,\,i+n-1]=A_{i+n}$ and
$[0,n-1]=A_n;$
if $i=0,$ it is $[0,\,n-1]=A_n,$ and if $i>0,$ it is a
difference of overlapping sets,
$[0,\,n-1]\setminus[-n+i,\,i-1]=A_n\setminus A_i.$
The symmetric argument shows that we can get
$[-n+1,\,j],$ completing the proof.
\end{proof}

Let us observe that for $n\leq 2,$ the members
of {\em every} family $\cC$ of $n$ subsets of a set $\Omega$
are convex under some ordering of $\Omega.$
This is immediate for $n=0$ and $n=1.$
To see the case $n=2,$ let $\cC=\{A_1,\,A_2\}.$
If $A_1$ and $A_2$ are disjoint or one contains
the other, it is again easy to see how to get such an ordering.
If they overlap, then by choosing any ordering in
which all elements of $A_1\setminus A_2$ lie above
those of $A_1\cap A_2,$ and these lie above
all elements of $A_2\setminus A_1,$ while every element not
in $A_1\cup A_2$ lies above or below all elements
of $A_1\cup A_2,$ we again get the desired orderability property.

For $n=0,\,1,\,2,$ the upper bounds given by
Proposition~\ref{P.size} are $2,$ $3$ and $9,$ while
those given by Proposition~\ref{P.size_cnvx} are $2,$ $3$ and $8.$
Thus, these functions differ first at $n=2.$
By the above orderability observation for $n=2,$
the upper bound given by Proposition~\ref{P.size_cnvx} supersedes
that given by Proposition~\ref{P.size}.

A consequence of Proposition~\ref{P.size_cnvx} is that given
a set $\cC$ of $n$ subsets of a finite set $\Omega,$ one can
determine in polynomial time whether there exists an ordering
of $\Omega$ making all elements of $\cC$ convex.
The idea is that starting with a list of the $n$ sets
in $\cC,$ together with the sets $\Omega$ and $\emptyset,$
one searches the list for overlapping pairs
$A,$ $B,$ and when one finds such a pair, adds to
the list their union,
their intersection, and their relative complements,
and repeats this process recursively.
Within polynomial time, one will know whether this recursive process
terminates (i.e., gives a family closed under those partial operations)
before the cardinality of the list goes above $\binom{2n}{2}+2.$
If it doesn't so terminate,
then by Proposition~\ref{P.size_cnvx}
there can be no ordering of the desired sort.
If it does, then one has an enumeration of $\cP,$
and one can work out its adjacency structure
and apply Theorem~\ref{T.cnvx}\,(a)\!$\iff$\!(b)
to determine whether there exists such an ordering.
The time needed will be polynomial in $n$ and the number of
elements in $\Omega.$
(If, rather, $\Omega$ is infinite, then whether one can do the same
in a time that is polynomial in $n$ will depend on what sort of
descriptions we have of $\Omega$ and the subsets
of $\Omega$ that form~$\cC.)$

\section{Other solutions to question~\eqref{d.FW_q}, using
results in the literature}\label{S.int_grph}

Given a set $\cC$ of subsets of a set $\Omega,$
the graph having the members of $\cC$ as vertices, and
an edge between $A,B\in\cC$ if and only if $A\cap B\neq\emptyset,$
is called the {\em intersection graph} of $\cC.$
There has been considerable study of the intersection graphs of finite
sets of intervals of the real line.
Graphs that can be so represented
are called {\em interval graphs}, and several
characterizations of such graphs have been established
(see~\cite{Wiki_int_graph}, and for some details,
\cite[Theorem~9.4.4]{graph} and~\cite{hsu}).

The literature varies, inter alia, as to whether intervals are assumed
to be closed or open \cite[Remark~9.4.2, point~2]{graph}.
In fact, it is not hard to show that a finite graph is
an interval graph in either sense if and only if it satisfies
the formally weaker condition that for {\em some}
totally ordered set $(\Omega,\leq),$ the graph can be represented
as the intersection graph of a set of convex subsets of $\Omega.$
However, criteria for a finite graph to be an interval graph
do not answer the finite-$\!\cC\!$ case of
question~\eqref{d.FW_q}:
If the intersection graph of $\cC$ is
an interval graph, this only tells us that it is isomorphic to
the intersection graph of {\em some} set of convex subsets
of some totally ordered set.
Observe, for example, that the complete graph with
three vertices is the intersection graph of the family of three sets
$\{1\}\subset\{1,2\}\subset\{1,2,3\},$ which are convex under the
standard ordering of the integers, but is also the
intersection graph of $\{x,y\},$ $\{y,z\},$ $\{x,z\},$
which are not simultaneously convex under any ordering of $\{x,y,z\}.$

However, Dave Witte Morris (personal communication)
has pointed out that one can get around this difficulty by
bringing in the singleton subsets of $\Omega$ alongside the elements
of $\cC$ (assuming for the moment that $\Omega$ is finite).
We develop the result below.
Here we take an {\em interval} to mean
a set $[s,t]=\{x\in\R\mid s\leq x\leq t\}$ where $s<t\in\R.$
(Because intervals are by definition nonempty, we shall, in
defining the set $\cC^+$ below, not only append to $\cC$ all singletons,
but also remove $\emptyset$ if it belonged to $\cC.)$

\begin{proposition}[D.\,W.\,Morris]\label{P.DWM}
Let $\Omega$ be a finite set, $\cC$ a set of subsets
of $\Omega,$ and $\cC^+$ the set
consisting of all members of $\cC$ other than $\emptyset,$
and also all singletons $\{a\}$ $(a\in\Omega).$
Then the following two conditions are equivalent:\\[.2em]
\textup{(i)}  There exists a total ordering $\leq$ of $\Omega$ under
which all members of $\cC$ are convex.\\[.2em]
\textup{(ii)}  The intersection graph of $\cC^+$ is an
interval graph \textup{(}i.e., is isomorphic
to the intersection graph of a set of
intervals of the real line\textup{)}.\vspace{.2em}

Thus, applying to $\cC^+$ any of
the criteria in~\cite{Wiki_int_graph} for a finite graph
to be an interval graph, one gets criteria for the
members of $\cC$ to be convex under some ordering of $\Omega.$
\end{proposition}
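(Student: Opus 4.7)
The plan is to establish both directions by producing an explicit interval representation on $\R$. For (i)$\implies$(ii) we build one directly from the given total ordering on $\Omega,$ while for (ii)$\implies$(i) we read an ordering off from the given interval representation, using the singletons as position markers.

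For (i)$\implies$(ii), I would enumerate $\Omega$ in the given order as $a_1<\cdots<a_n$ and assign to each $X\in\cC^+$ the real interval
\[
I_X \;=\; \bigl[\min\{i:a_i\in X\},\ \max\{i:a_i\in X\}\bigr].
\]
A singleton $\{a_k\}$ then maps to $[k,k],$ and any other $X\in\cC^+$ lies in $\cC,$ hence is convex under $\leq,$ forcing $\{i:a_i\in X\}$ to be an unbroken block of consecutive integers and in particular equal to $I_X\cap\mathbb Z$. A routine check then shows $X\cap Y\neq\emptyset$ iff $I_X\cap I_Y\neq\emptyset$ for all $X,Y\in\cC^+,$ exhibiting the intersection graph of $\cC^+$ as an interval graph.

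For (ii)$\implies$(i), I would fix an isomorphism between the intersection graph of $\cC^+$ and that of a family of real intervals $\{I_X:X\in\cC^+\}$ (so that $X\cap Y\neq\emptyset\iff I_X\cap I_Y\neq\emptyset$). Since the singletons $\{a\}$ $(a\in\Omega)$ are pairwise disjoint as subsets of $\Omega,$ the intervals $I_{\{a\}}$ are pairwise disjoint in $\R$ and hence linearly ordered by ``lies to the left of''; transporting this ordering to $\Omega$ yields a total order~$\leq$. The remaining task is to verify that every nonempty $A\in\cC$ is convex under~$\leq$: for $a<b<c$ in $\Omega$ with $a,c\in A,$ the strict left-to-right order of $I_{\{a\}},I_{\{b\}},I_{\{c\}}$ combined with $I_{\{a\}}\cap I_A\neq\emptyset$ and $I_{\{c\}}\cap I_A\neq\emptyset$ forces $I_{\{b\}}\cap I_A\neq\emptyset,$ hence $b\in A.$

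The argument is essentially bookkeeping once the right setup is in place; the conceptual point worth emphasizing is \emph{why} the singletons must be adjoined before forming the intersection graph. The triangle example preceding the proposition shows that the intersection graph of $\cC$ alone forgets how the points of $\Omega$ sit among the sets of $\cC,$ so no ordering can in general be extracted from it. Inserting the singletons labels each element of $\Omega$ inside the graph, and the disjointness of the $I_{\{a\}}$ forces these labels to occupy distinct positions on the real line---which is exactly what makes the ordering readable from any interval representation.
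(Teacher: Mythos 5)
Your proof is correct and follows essentially the same route as the paper's: represent each set by the convex hull of disjoint marker intervals for its points, and conversely read the ordering off the pairwise disjoint intervals representing the singletons, using convexity of $I_A$ to force betweenness. The only cosmetic discrepancy is that your singletons map to degenerate intervals $[k,k],$ whereas the paper's convention requires $s<t$; replacing $[k,k]$ by $[k-\tfrac13,\,k+\tfrac13]$ (as the paper in effect does with its $s_i<t_i<s_{i+1}$) removes the issue without changing anything else.
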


\begin{proof}
First assume we have an ordering $\leq$ as in~(i), and list
the elements of $\Omega$ in order as $a_1<\dots<a_n.$
Choose real numbers $s_1<t_1<s_2<t_2<\dots<s_n<t_n.$
These determine disjoint intervals $I_i=[s_i,t_i]$
$(i=1,\dots,n),$ ordered like the elements of $\Omega.$

To each nonempty convex subset $A$ of $\Omega,$ we can now associate
an interval $f(A)$ of the real line: the convex
closure of $\bigcup_{a_i\in A} I_i.$
It is straightforward to verify that for nonempty convex
subsets $A$ and $B$ of $\Omega,$ we have $A\cap B\neq\emptyset$ if
and only if $f(A)\cap f(B)\neq\emptyset.$
(The forward implication is immediate; conversely, if
$A$ and $B$ are disjoint, one can see from their
convexity that $f(A)$ and $f(B)$ will be disjoint.)
Applying $f$ to the members of $\cC^+,$ all of which
are convex in $(\Omega,\leq),$ we get the desired representation
of the intersection graph of $\cC^+$ as an interval graph.

Conversely, suppose the intersection graph of $\cC^+$ is
an interval graph,
i.e., that there exists a map $f$ carrying members of $\cC^+$
to intervals in the real line, such that
\begin{equation}\begin{minipage}[c]{35pc}\label{d.nonempty}
$f(A)\cap f(B)\neq\emptyset$ \quad if and only if \quad
$A\cap B\neq\emptyset.$
\end{minipage}\end{equation}
Since distinct singletons $\{a\}$ and $\{b\}$ $(a, b\in\Omega)$
are disjoint,~\eqref{d.nonempty} implies that
the $f(\{a\})$ $(a\in\Omega)$ will
be disjoint intervals, hence this family
of intervals has a natural ordering based on the ordering of $\R.$
In other words, there is a way of listing
the elements of $\Omega$ as $a_1,\dots,a_n$ so that, writing
$f(\{a_i\})=[s_i,t_i],$ we have $s_1<t_1<s_2<t_2<\dots<s_n<t_n.$
Hence let us order $\Omega$ by setting $a_1<\dots<a_n.$
For each $a\in\Omega$ and nonempty $A\in\cC,$ note that
\begin{equation}\begin{minipage}[c]{35pc}\label{d.in_iff}
$a\in A\ \iff\ \{a\}\cap A\neq\emptyset
\ \iff\ f(\{a\})\cap f(A)\neq\emptyset,$
\end{minipage}\end{equation}
where the second equivalence holds by~\eqref{d.nonempty}.
Now the convexity of $f(A),$
together with the fact that the $f(\{a_i\})$ $(i=1,\dots,n)$
form an ordered string of disjoint intervals, shows that
those $f(a_i)$ that are contained in $f(A)$ form a consecutive
substring of this string of intervals, whence by~\eqref{d.in_iff}
the set of $a\in A$ is a consecutive string under our
ordering of $\Omega;$ so every nonempty $A\in\cC$ is
indeed convex under that ordering.
$\cC$ might also contain $\emptyset,$ which we
had to exclude from $\cC^+;$ but $\emptyset$ is vacuously convex,
so our proof of~(i) is complete.
\end{proof}

Note that since we have been considering finite graphs only,
in the above result we had to take not only
$\cC,$ but also $\Omega$ to be finite; something I avoided
doing in earlier sections so that we could
make use of Lemma~\ref{L.inf<fin}.
However, given a not necessarily finite set $\Omega$ and a finite
set $\cC$ of subsets of $\Omega,$ we can proceed as follows.
For $a,b\in\Omega,$ let $a\sim b$ if and only
if the set of members of $\cC$ containing $a$ is the same
as the set of those containing $b.$
This is an equivalence relation on $\Omega$
with $\leq 2^{\r{card}(\cC)}$ equivalence classes.
Let $\Omega_0\subseteq\Omega$ be a set of
representatives of these finitely many equivalence classes,
and define $\cC_0=\{A\cap\Omega_0\mid A\in\cC\},$
a set of subsets of $\Omega_0.$
Then, on the one hand, if every member of $\cC$ is convex
under an ordering of $\Omega,$ every
member of $\cC_0$ is clearly convex
under the induced ordering of $\Omega_0.$
On the other hand, if there is an ordering $\leq_0$ of $\Omega_0$
under which all members of $\cC_0$ are convex,
we can construct an ordering $\leq$ of $\Omega$ by putting an
arbitrary internal total ordering on
each equivalence class under $\sim,$
and arranging those classes one above
another as their representatives are arranged in $\Omega_0.$
Then we see that all members of $\cC$ will be convex under the
resulting ordering.

Now Proposition~\ref{P.DWM} gives criteria for
the members of $\cC_0$ to be convex under an ordering of $\Omega_0,$
and by the above discussion this is equivalent to the existence of
an ordering of $\Omega$ under which the members of $\cC$ are convex.

Finally, we can apply Lemma~\ref{L.inf<fin} to get criteria
for such an ordering to exist in the case of infinite~$\cC.$

Two variants of the above approach were pointed out by
Martin Milani\v{c} (personal communication).

On the one hand, one may consider a family $\cC$ of subsets of
a set $\Omega$ as
determining a {\em bipartite} graph, where one family of vertices
corresponds to the elements of $\cC$ and the other
to the points of $\Omega,$ and an edge connects (the vertex
corresponding to) $S\in\cC$ with (the one corresponding to)
$p\in\Omega$ if and only if $p\in S.$
Results are known on when such a bipartite graph corresponds
to a family $\cC$ of convex sets under an ordering on
$\Omega$~\cite[\S\,9.7.2]{gr_clss}.

On the other hand, a pair $(\Omega,\,\cC)$ can be regarded
as a {\em hypergraph},
with $\Omega$ the set of vertices, and $\cC$ the set of hyperedges.
The question of when the hyperedges of a hypergraph are intervals
under some ordering of the vertex-set is studied
in~\cite{int_hyp}; see also~\cite[\S\,8.7]{gr_clss}.

\section{Acknowledgements}\label{S.Ackn}
I am indebted to Friedrich Wehrung for raising the
question~\eqref{d.FW_q} and for helpful comments on this write-up, and
to Dave Witte Morris and Martin Milani\v{c}
for the material of~\S\ref{S.int_grph}.

\end{document}